\theoremstyle{thmstyleone}%
\newtheorem{theorem}{Theorem}
\newtheorem{proposition}[theorem]{Proposition}%
\newtheorem{lemma}[theorem]{Lemma}%
\theoremstyle{thmstyletwo}%
\newtheorem{remark}[theorem]{Remark}%
\newtheorem{assumption}{Assumption}
\newcommand{\wh}{\widehat}
\newcommand{\wt}{\widetilde}
\newcommand{\bb}{\mathbf b}
\newcommand{\bc}{\mathbf c}
\newcommand{\be}{\mathbf e}
\newcommand{\bg}{\mathbf g}
\newcommand{\bs}{\mathbf s}
\newcommand{\bu}{\mathbf u}
\newcommand{\bv}{\mathbf v}
\newcommand{\bx}{\mathbf x}
\newcommand{\by}{\mathbf y}
\newcommand{\bz}{\mathbf z}
\newcommand{\zero}{\mathbf 0}
\newcommand{\step}[2]{\beta_{#2}^{\rm{#1}}}
\newcommand{\invstep}[2]{\alpha_{#2}^{\rm{#1}}} 
\newcommand{\alphaa}{\alpha^{\rm{BB1}}}
\newcommand{\alphab}{\alpha^{\rm{BB2}}}
\newcommand{\altstep}{\widehat{\beta}} 
\newcommand{\betaa}{\beta^{\rm{BB1}}}
\newcommand{\betab}{\beta^{\rm{BB2}}}
\newcommand{\xia}{\xi_{\rm{low}}}
\newcommand{\xib}{\xi^{\rm{up}}}
\newcommand{\toneconst}{\rho}
\newcommand{\constone}{c_1} 
\newcommand{\consttwo}{c_2} 
\newcommand{\constray}{c} 
\newcommand{\gradcomp}[2]{\gamma_{#1}^{#2}}
\newcommand{\eps}{\varepsilon}
\newcommand{\ds}{\displaystyle}
\newcommand{\calo}{{\mathcal O}}
\newcommand{\calu}{{\mathcal U}}
\newcommand{\tol}{{\sf tol}}
\newcommand{\ph}{\phantom}
\DeclareMathOperator*{\argmin}{\arg\!\min}
\newcommand{\gf}[1]{#1}
\begin{document}

\title[A harmonic framework for stepsize selection in gradient methods]{A harmonic framework for stepsize selection \\ in gradient methods}


\author[1]{\fnm{Giulia} \sur{Ferrandi}}\email{g.ferrandi@tue.nl}

\author[1]{\fnm{Michiel E.} \sur{Hochstenbach}}\email{m.e.hochstenbach@tue.nl}

\author[2]{\fnm{Nata\v{s}a} \sur{Kreji\'c}}\email{natasak@uns.ac.rs}

\affil[1]{\orgdiv{Department of Mathematics and Computer Science}, \orgname{TU Eindhoven}, \orgaddress{\street{PO Box 513}, \city{Eindhoven}, \postcode{5600 MB}, \country{The Netherlands}}}

\affil[2]{\orgdiv{Department of Mathematics and Informatics, Faculty of Sciences}, \orgname{University of Novi Sad}, \orgaddress{\street{Trg D.Obradovi\'ca 4}, \city{Novi Sad}, \postcode{21000}, \country{Serbia}}}


\abstract{We study the use of inverse harmonic Rayleigh quotients with target for the stepsize selection in gradient methods for nonlinear unconstrained optimization problems.
This provides not only an elegant and flexible framework to parametrize and reinterpret existing stepsize schemes, but also gives inspiration for new flexible and tunable families of steplengths. In particular, we analyze and extend the adaptive Barzilai--Borwein method to a new family of stepsizes.
While this family exploits negative values for the target, we also consider positive targets.
We present a convergence analysis for quadratic problems extending results by Dai and Liao (2002), and carry out experiments outlining the potential of the approaches.}

\keywords{unconstrained optimization, harmonic Rayleigh quotient, gradient methods, framework for steplength selection, ABB method, Hessian spectral properties}

\pacs[AMS Classification]{65K05, 90C20, 90C30, 65F15, 65F10}

\maketitle

\section{Introduction} 
We study the unconstrained optimization problem
\[
\min_{\bx \in \mathbb{R}^n} f(\bx)
\]
for strictly convex quadratic and general nonlinear continuously differentiable functions $f: \mathbb{R}^n \to \mathbb{R}$.
We consider the popular gradient method
\[
\bx_{k+1} = \bx_k - \beta_k \, \bg_k \ = \ \bx_k - \alpha_k^{-1} \, \bg_k,
\]
where $\bg_k = \nabla f(\bx_k)$ and $\beta_k > 0$ is the steplength.
It is convenient to introduce a separate notation $\alpha_k$ for the inverse of the stepsize $\beta_k$, since both play important roles; $\alpha_k$ corresponds to (harmonic) Rayleigh quotients, which are scalars providing second-order information (on the Hessian).

As usual, write $\bs_{k-1} = \bx_k-\bx_{k-1}$ and $\by_{k-1} = \bg_k-\bg_{k-1}$.
Two popular stepsizes are the Barzilai--Borwein (BB) steplengths \cite{bb1988}
\[
\step{BB1}k = \frac{\bs_{k-1}^T \bs_{k-1}}{\by_{k-1}^T \bs_{k-1}}, \qquad
\step{BB2}k = \frac{\by_{k-1}^T \bs_{k-1}}{\by_{k-1}^T \by_{k-1}}.
\]
We denote their inverses by $\invstep{BB1}k$ and $\invstep{BB2}k$, respectively.
In case of convex quadratic problems
\begin{equation} \label{quad}
\min_{\bx \in \mathbb{R}^n} \ \tfrac12 \, \bx^T\!A\bx - \bb^T\bx,
\end{equation}
where $A$ is $n \times n$ symmetric positive definite (SPD), and $\bb \in \mathbb{R}^n$, the BB steps are the inverses of the Rayleigh quotient and harmonic Rayleigh quotient,
\[
\step{BB1}k = \frac{\bs_{k-1}^T \bs_{k-1}}{\bs_{k-1}^T A \, \bs_{k-1}}, \qquad
\step{BB2}k = \frac{\bs_{k-1}^T A \, \bs_{k-1}}{\bs_{k-1}^T A^2 \, \bs_{k-1}}.
\]
We refer to \cite{daniela2018steplength} for a nice recent review on various steplength options.

In this paper, we will consider a general framework for these and other stepsizes by introducing a harmonic Rayleigh quotient including a {\em target} $\tau$.
We recall the harmonic Rayleigh--Ritz extraction for matrix eigenvalue problems in Section~\ref{sec:harm}.
The general form of this extraction features a target $\tau \in \mathbb{R} \, \cup \, \{ \pm \infty \}$.
This target is analyzed and exploited in Section~\ref{sec:frame}, to develop a new general framework for all possible stepsizes. We will see that the BB stepsizes correspond to $\tau = 0$ or $\tau = \pm \infty$.
This may not only add towards a new understanding and interpretation of known strategies, but also suggests new competitive schemes.
Section~\ref{sec:abb} closer studies the Adaptive Barzilai--Borwein method (ABB) \cite{zhou2006abb}, and provides a new theoretical justification for it. We also showcase the potential of the framework by introducing new families generalizing the ABB method.
As is common (see, e.g., \cite{daniela2018steplength}) we first consider the convex quadratic problem.
Convergence results for this case, extending those of \cite{dai2002r}, are presented in Section~\ref{sec:conv}.
The extension of the harmonic steplength to general nonlinear problems is treated in Section~\ref{sec:nonlin}.
Finally, we carry out numerical experiments and summarize some conclusions in Sections~\ref{sec:exp} and \ref{sec:concl}.

\section{Harmonic extraction and harmonic Rayleigh quotients} \label{sec:harm}
The harmonic Rayleigh--Ritz extraction has been introduced in the context of eigenvalue problems (see, e.g., \cite{morgan1991computing, PPV95}, \cite[Sec.~4.4]{Ste01}, \cite{Hoc05}) to extract promising approximate (interior) eigenpairs from a subspace. Consider the eigenproblem $A\bx = \lambda \bx$ for a given square $A$. Although $A$ does not necessarily need to be symmetric or real for the harmonic extraction method, in our optimization context we are interested in SPD matrices $A$.

Suppose that we wish to extract promising approximate eigenpairs from a low-dimensional search space $\calu$ for which the columns of $U \in \mathbb{R}^{n \times d}$ form an orthogonal basis, where usually $d \ll n$. We are interested in finding approximate eigenpairs $(\theta, \bu) \approx (\lambda, \bx)$, where $\bu$ is of the form $\bu = U\bc \approx \bx$, with $\bc \in \mathbb{R}^d$ of unit 2-norm.
The standard Rayleigh--Ritz extraction imposes the Galerkin condition
\[
AU\bc - \theta \, U\bc \perp \calu.
\]
This leads to $d$ approximate eigenpairs $(\theta_j, U\bc_j)$, for $j = 1, \dots, d$, obtained from the eigenpairs $(\theta_j, \bc_j)$ of $U^T\!AU$.

Denote the eigenvalues of $A$ by $0 < \lambda_1 \le \cdots \le \lambda_n$.
The standard Rayleigh--Ritz extraction enjoys a good reputation for exterior eigenvalues (see, e.g., \cite{Par98}), which means the largest or smallest few eigenvalues, in our case of symmetric $A$. However, for interior eigenvalues near a target $\tau \in (\lambda_1, \lambda_n)$, the harmonic Rayleigh--Ritz extraction tends to produce approximate eigenvectors of better quality. This approach works as follows (see, e.g., \cite[Sec.~4.4]{Ste01} for more details).

Let $\tau$ be not equal to an eigenvalue; in the context of eigenvalue problems, $\tau$ is typically chosen close to the eigenvalues of interest. Eigenvalues near $\tau$ are exterior eigenvalues of $(A-\tau I)^{-1}$, which is a favorable situation to impose a Galerkin condition. Therefore, the idea is to impose such a condition involving this shifted and inverted matrix. To avoid having to work with an inverse of a (potentially large) matrix, a modified Galerkin condition
\[
(A-\tau I)^{-1} \, U \, \wt \bc - (\wt \theta-\tau)^{-1} \, U \, \wt \bc \perp (A-\tau I)^2\,\calu
\]
is considered.
We note that this is equivalent to the Galerkin condition $(A-\tau I)^{-1} \,\bu - (\theta-\tau)^{-1} \, \bu \perp (A-\tau I)\,\calu$ for $\bu \in (A-\tau I)\,\calu$, which considers this extraction from a different viewpoint.

This implies that the quantities of interest are $(\wt \theta_j, \wt \bc_j)$, for $j = 1, \dots, d$, the eigenpairs of the pencil $(U^T(A-\tau I)\,AU, \ U^T(A-\tau I)\,U)$; and the associated vectors $\wt \bu_j = U \wt \bc_j$.
This means that the relation between a harmonic Ritz vector $\wt \bu = U \wt \bc$ and the corresponding harmonic Ritz value is
\begin{equation} \label{hrq0}
\wt \theta = \frac{\wt \bu^T (A-\tau I)^2 \, \wt \bu}{\wt \bu^T (A-\tau I)\, \wt \bu} + \tau
\ = \ \frac{\wt \bu^T (A-\tau I)\,A \,\wt \bu}{\wt \bu^T (A-\tau I)\, \wt \bu}.
\end{equation}
We will exploit this quantity in the next section to introduce a general harmonic framework for the choice of steplengths.

\section{A harmonic framework for stepsize selection} \label{sec:frame}
Inspired by \eqref{hrq0}, we now propose and study the use of harmonic Rayleigh quotients of the form
\begin{equation} \label{hrq}
\alpha_k(\tau_k) = \frac{\bs_{k-1}^T (A-\tau_k I)\, A\,\bs_{k-1}}{\bs_{k-1}^T (A-\tau_k I) \, \bs_{k-1}}
\end{equation}
in the context of gradient methods, where the $\tau_k$ are targets that may be varied throughout the process. In contrast to the use of the target for eigenvalue problems, where the $\tau_k$ are typically selected inside or very close to the interval $[\lambda_1, \lambda_n]$ (as discussed in Section~\ref{sec:harm}), we investigate strategies with $\tau$-values outside this interval, as well as schemes where these targets may sometimes be inside.

The stepsize we consider is given by the inverse harmonic Rayleigh quotient
\begin{equation} \label{hss}
\beta_k(\tau_k) = \frac{\bs_{k-1}^T (A-\tau_k I) \, \bs_{k-1}}{\bs_{k-1}^T (A-\tau_k I)\,A \,\bs_{k-1}}.
\end{equation}
We will refer to these steps as ``TBB steps'': Barzilai--Borwein type of steps using a harmonic Rayleigh quotient with target $\tau_k$.
In the rest of this section we will consider various aspects of gradient methods with TBB steps as in \eqref{hss}.
In particular, we will discuss strategies for picking $\tau_k$ in Section~\ref{sec:target} and \ref{sec:cotan}.

\subsection{Properties of the TBB stepsize} 
\label{sec:beta}
\gf{In this section, we discuss some properties of the TBB stepsize for strictly convex quadratic functions, i.e., when the Hessian matrix $A$ is SPD and thus $\bs^T\!A\bs > 0$ for any $\bs\ne 0$.}
Then $\alphaa_k \le \alphab_k$ and therefore $\betab_k \le \betaa_k$; in fact (see, e.g., \cite{daniela2018steplength})
\begin{equation} \label{aratio}
\alphaa_k \, / \, \alphab_k = \betab_k \, / \, \betaa_k = \cos^2(\bs_{k-1}, A\bs_{k-1}).
\end{equation}
The following proposition summarizes several basic but essential properties of stepsize \eqref{hss}. \gf{To ease the notation, we drop the index $k$ whenever it is clear that we are referring to the same iteration in the gradient method.}

\begin{proposition} \label{prop:beta}
Let $\bs \in \mathbb{R}^n$ be not equal to a multiple of an eigenvector of $A$, SPD. The function $\beta(\tau) = \frac{\bs^T (A-\tau I) \, \bs}{\bs^T (A-\tau I) \, A \, \bs}$ enjoys the following properties.
\begin{enumerate}
\item[(i)] $\beta(\tau)$ is defined for all $\tau \in \mathbb{R}$ with exception of $\alphab = \frac{\bs^T \! A^2 \, \bs}{\bs^T \! A \, \bs}$, and is a strictly monotonically decreasing function on $(-\infty, \invstep{BB2}{})$ and $(\invstep{BB2}{}, \infty)$.
\item[(ii)] Alternative expressions are $\beta(\tau) = \betaa \, \frac{\tau-\alphaa}{\tau-\alphab} = \betab \, \frac{\betaa \tau-1}{\betab \tau-1}$.
\item[(iii)] $\beta(0) = \betab$ and $\ds \lim_{\tau \to \pm \infty} \beta(\tau) = \betaa$.
\item[(iv)] For $-\infty < \tau < 0$, it holds $\betab < \beta(\tau) < \betaa$.
\item[(v)] For $0 < \tau < \lambda_1$, we have $\frac{1-\betaa \lambda_1}{1-\betab \lambda_1} \cdot \betab < \beta(\tau) < \betab$.
\item[(vi)] For $\tau > \lambda_n$, it holds that $\betaa < \beta(\tau) < \frac{\lambda_n-\alphaa}{\lambda_n-\alphab} \cdot \betaa$.
\item[(vii)] $\beta$ is a bijection from $\mathbb{R} \backslash \{ \alphab \}$ to $\mathbb{R} \backslash \{ \betaa \}$, and from $\mathbb{R} \cup \{\pm \infty\}-\{\alphab\}$ to $\mathbb{R}$.
\end{enumerate}
\end{proposition}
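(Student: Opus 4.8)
The plan is to reduce the whole statement to elementary calculus and algebra on a rational function of $\tau$ by introducing the three scalars $p = \bs^T\bs$, $q = \bs^T\! A\,\bs$, and $r = \bs^T\! A^2\,\bs$, all strictly positive since $A$ is SPD. With these, $\beta(\tau) = (q-\tau p)/(r-\tau q)$, while $\betaa = p/q$, $\alphaa = q/p$, $\betab = q/r$, and $\alphab = r/q$. For (i) the denominator vanishes exactly at $\tau = r/q = \alphab$, recovering the excluded value, and differentiating gives $\beta'(\tau) = (q^2 - pr)/(r-\tau q)^2$, so monotonicity hinges on the sign of $q^2-pr$. This is the one genuinely analytic point: Cauchy--Schwarz applied to $\bs$ and $A\,\bs$ gives $q^2 = (\bs^T\! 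A\,\bs)^2 \le (\bs^T\bs)(\bs^T\! A^2\,\bs) = pr$, with equality only when $\bs$ and $A\,\bs$ are parallel, i.e.\ when $\bs$ is an eigenvector. The hypothesis rules this out, so $q^2 - pr < 0$ strictly, yielding $\beta'(\tau) < 0$ on each of the two intervals separated by the pole. The same strict inequality gives $\betaa/\betab = pr/q^2 > 1$, hence $\betab < \betaa$, which I will use throughout.

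Parts (ii)--(iii) are then mechanical. Substituting the expressions for $\betaa,\betab,\alphaa,\alphab$ into the two candidate forms and clearing denominators reproduces $(q-\tau p)/(r-\tau q)$; evaluating at $\tau=0$ gives $q/r = \betab$, and the ratio of leading coefficients gives $\lim_{\tau\to\pm\infty}\beta(\tau) = p/q = \betaa$.

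For the sandwich bounds (iv)--(vi) the strategy is to locate the pole and the endpoint values relative to the two monotonicity intervals. Since $\bs$ is not an eigenvector, both $\alphaa = q/p$ and $\alphab = r/q$ are Rayleigh quotients lying strictly inside $(\lambda_1,\lambda_n)$ (for $\alphab$ apply this to $A^{1/2}\bs$), and $\alphaa \le \alphab$ by the strict form of \eqref{aratio}. Hence each target range $(-\infty,0)$, $(0,\lambda_1)$, $(\lambda_n,\infty)$ lies entirely in one monotonicity interval and avoids the pole $\alphab$. Strict monotone decrease then pins $\beta(\tau)$ strictly between its two endpoint values, which I read off from the forms in (ii): $\beta(0)=\betab$ and the limits at $\pm\infty$ equal $\betaa$ from (iii); $\beta(\lambda_1) = \betab\,(1-\betaa\lambda_1)/(1-\betab\lambda_1)$ gives the lower bound in (v); and $\beta(\lambda_n) = \betaa\,(\lambda_n-\alphaa)/(\lambda_n-\alphab)$ gives the upper bound in (vi). I would also record in passing that $\betaa\lambda_1 < 1$ and $\betab\lambda_1 < 1$ (from $\alphaa,\alphab > \lambda_1$), so the displayed coefficients are positive and lie in the expected range.

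Finally, (vii) is cleanest structurally: $\beta$ is the Möbius transformation $\tau \mapsto (-p\tau + q)/(-q\tau + r)$, whose associated $2\times 2$ matrix has determinant $q^2 - pr$, nonzero precisely by the strict Cauchy--Schwarz inequality from (i). Such a map is a bijection of the extended real line $\mathbb{R}\cup\{\pm\infty\}$; its pole is $\tau = \alphab$ (sent to $\infty$), while $\tau = \pm\infty$ is sent to $\betaa$. Deleting the pole from the domain and the value $\betaa$ from the codomain yields the first bijection, whereas keeping $\pm\infty$ in the domain (still deleting $\alphab$) restores $\betaa$ as the image of $\pm\infty$ and gives the bijection onto all of $\mathbb{R}$. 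The only real obstacle is the strict Cauchy--Schwarz step; everything else is bookkeeping organized around it.
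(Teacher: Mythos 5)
Your proof is correct and follows essentially the same route as the paper's: the identical derivative formula, whose numerator $(\bs^T\!A\,\bs)^2 - (\bs^T\bs)(\bs^T\!A^2\,\bs)$ is negative by strict Cauchy--Schwarz (the paper phrases this as $\cos^2(A\bs,\bs) < 1$), followed by the same monotonicity-plus-endpoint-evaluation argument for (iv)--(vi) using the alternative forms in (ii). You go slightly beyond the paper in two useful places: you verify explicitly that the pole $\alphab$ lies strictly inside $(\lambda_1,\lambda_n)$ (via the Rayleigh quotient of $A$ at $A^{1/2}\bs$), a fact the paper uses only implicitly when asserting $\beta$ is defined on $(-\infty,0)$, $(0,\lambda_1)$, and $(\lambda_n,\infty)$; and you actually prove (vii), via the M\"obius-transformation argument with determinant $q^2 - pr \ne 0$, whereas the paper's written proof stops after (vi) and leaves (vii) to be inferred from (i)--(iii).
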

\begin{proof}
The derivative of $\beta$ with respect to $\tau$ is given by
\[
\beta'(\tau) = \frac{(\bs^T \! A \, \bs)^2 - (\bs^T \! A^2 \, \bs) \, (\bs^T \bs)}{(\bs^T (A-\tau I) \, A \, \bs)^2}.
\]
The numerator is equal to $\|A\bs\|^2 \, \|\bs\|^2 \cos^2(A\bs, \bs) - \|A\bs\|^2 \, \|\bs\|^2 < 0$, since $\bs$ is assumed to be not equal to an eigenvector; part (i) follows from this.
Item (ii) is obtained by factoring out $\bs^T \bs$ in the numerator, and $\bs^T \! A \bs$ in the denominator.
Part (iii) follows directly from (ii).
Since $\beta$ is defined everywhere and strictly decreasing on the interval $(-\infty, 0)$ we get item (iv).
Part (v) is derived from (ii) by the fact that $\beta$ on the interval $(0, \lambda_1)$ is defined everywhere and strictly decreasing.
The factor $\frac{1-\betaa \lambda_1}{1-\betab \lambda_1}$ is less than one, since $\betaa,\, \betab < \lambda_1^{-1}$ (again by the fact that $\bs$ is not a multiple of an eigenvector) and $\betab < \betaa$.
Item (vi) follows from the fact that $\beta$ is defined everywhere and strictly decreasing on the interval $(\lambda_n, \infty)$.
The factor $\frac{\lambda_n-\alphaa}{\lambda_n-\alphab}$ is greater than one in view of $\alphaa,\, \alphab< \lambda_n$ and $\alphaa < \alphab$.
\end{proof}

It is particularly item (vii) that implies that {\em the harmonic Rayleigh quotient forms a framework or parametrization for all possible steplengths}: together with target $\tau = \pm \infty$, we have a one-to-one relation between targets in $\mathbb{R} \cup \{ \pm \infty \}$ and any real stepsize (positive or negative).
We stress that, because of the pole of $\beta$ in $\tau = \alphab$, the stepsize might be unbounded for $\tau \in (\lambda_1, \lambda_n)$, which evidently is unwanted. 
Note that in the (unlikely) case that $\bs$ is equal to an eigenvector corresponding to eigenvalue $\lambda$, $\beta(\tau)$ is equal to the constant function $\beta(\tau) \equiv \lambda^{-1}$ (with exception of the ``hole'' at $\tau = \lambda$).
Figure~\ref{fig:hrq} gives an impression of the properties in Proposition~\ref{prop:beta} for a typical situation.

\begin{figure}[htb!]
\centering \includegraphics[width=0.9\textwidth]{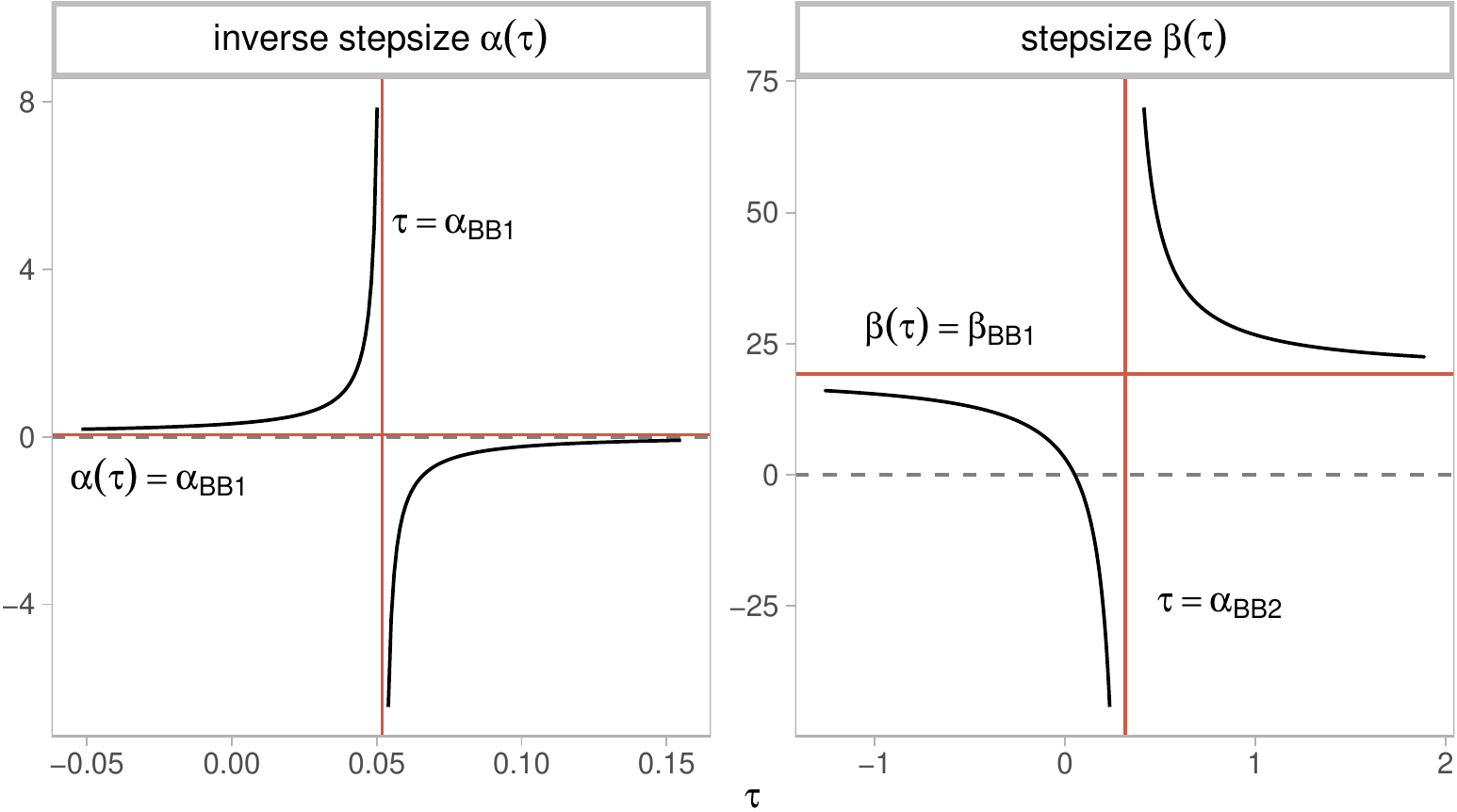}
\caption{Harmonic Rayleigh quotient (left) and its inverse, the stepsize (right), as a function of $\tau$ for the convex quadratic case $A = \text{diag}(\frac1{100}, \frac{1}{99}, \dots, \frac12, 1)$, where $\bs = (1,\dots,1)^T$.}
\label{fig:hrq}
\end{figure}

For completeness, we also list some characteristics of the inverse stepsize $\alpha(\tau)$ (see \eqref{hrq}), the harmonic Rayleigh quotient.

\begin{proposition}
Let $\bs \in \mathbb{R}^n$ be not equal to a multiple of an eigenvector of $A$, SPD.
\begin{enumerate}
\item[(i)] The function $\alpha(\tau) = \frac{\bs^T (A-\tau I) \, A \, \bs}{\bs^T (A-\tau I) \, \bs}$ is defined for all $\tau \backslash \{ \alphaa \}$, and is a strictly monotonically increasing function on the intervals $(-\infty, \alphaa)$ and $(\alphaa, \infty)$.
\item[(ii)] Alternative expression are $\alpha(\tau) = \alphaa \, \frac{\tau-\alphab}{\tau-\alphaa} = \alphab \, \frac{\betab \, \tau-1}{\betaa \, \tau-1}$.
\item[(iii)] $\ds \lim_{\tau \to \pm \infty} \alpha(\tau) = \alphaa$ and $\alpha(0) = \alphab$.
\end{enumerate}
\end{proposition}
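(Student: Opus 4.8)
The plan is to use that $\alpha(\tau)$ is exactly the reciprocal of the stepsize $\beta(\tau)$ from Proposition~\ref{prop:beta}, so in principle every assertion could be inherited from there; however, because $\beta$ changes sign at its zero (which becomes the pole of $\alpha$), I would carry out the short computations directly rather than naively invert the monotonicity interval-by-interval.

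For part (i), I would first expand $(A-\tau I)$ to write
\[
\alpha(\tau) = \frac{\bs^T\! A^2\bs - \tau\,\bs^T\! A\bs}{\bs^T\! A\bs - \tau\,\bs^T\bs},
\]
which exhibits the single pole: the denominator vanishes precisely at $\tau = \frac{\bs^T\! A\bs}{\bs^T\bs} = \alphaa$. Applying the quotient rule, the $\tau$-dependent terms in the numerator of the derivative cancel, leaving
\[
\alpha'(\tau) = \frac{(\bs^T\! A^2\bs)(\bs^T\bs) - (\bs^T\! A\bs)^2}{(\bs^T(A-\tau I)\bs)^2}.
\]
By the Cauchy--Schwarz inequality applied to $\bs$ and $A\bs$, the numerator equals $\|\bs\|^2\,\|A\bs\|^2 - (\bs^T\! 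A\bs)^2 \ge 0$, and is strictly positive exactly because $\bs$ is not a multiple of an eigenvector. Hence $\alpha' > 0$ wherever defined, which gives strict monotonic increase on $(-\infty,\alphaa)$ and on $(\alphaa,\infty)$.

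For part (ii), I would factor $\bs^T\! A\bs$ out of the numerator and $\bs^T\bs$ out of the denominator of the expanded form above, recognizing $\frac{\bs^T\! A^2\bs}{\bs^T\! A\bs} = \alphab$ and $\frac{\bs^T\! A\bs}{\bs^T\bs} = \alphaa$, to obtain $\alpha(\tau) = \alphaa\,\frac{\tau-\alphab}{\tau-\alphaa}$. Substituting $\alphaa = (\betaa)^{-1}$ and $\alphab = (\betab)^{-1}$ and clearing the resulting fractions then yields the second expression $\alpha(\tau) = \alphab\,\frac{\betab\,\tau-1}{\betaa\,\tau-1}$. Part (iii) follows immediately from the first form in (ii): as $\tau \to \pm\infty$ the ratio $\frac{\tau-\alphab}{\tau-\alphaa} \to 1$, so $\alpha(\tau) \to \alphaa$, while evaluating at $\tau=0$ gives $\alphaa\cdot\frac{\alphab}{\alphaa} = \alphab$.

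I expect no genuine obstacle, since the statement mirrors Proposition~\ref{prop:beta}. The only point requiring care is the reciprocal relationship $\alpha = 1/\beta$: the pole of $\alpha$ sits at $\tau = \alphaa$, which is the \emph{zero} of $\beta$ (whereas the pole of $\beta$ is at $\alphab$, where $\alpha$ vanishes). The direct derivative computation sidesteps this sign bookkeeping cleanly, and the strictness in the Cauchy--Schwarz step is what forces the non-eigenvector hypothesis on $\bs$.
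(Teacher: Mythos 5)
Your proof is correct and follows essentially the same route as the paper's: the identical derivative formula for $\alpha'(\tau)$ with strict positivity of its numerator coming from Cauchy--Schwarz and the non-eigenvector hypothesis (this is what the paper compresses into ``a reasoning similar to Proposition~\ref{prop:beta}''), the same factoring of $\bs^T\!A\bs$ and $\bs^T\bs$ for part (ii), and direct evaluation and limits for part (iii). Your cautionary remark about not naively inverting $\beta$ interval-by-interval is sound but ultimately plays no role, since both you and the paper argue directly from the derivative rather than via the reciprocal relationship.
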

\begin{proof}
The derivative of $\alpha$ with respect to $\tau$ satisfies
\[
\alpha'(\tau) = \frac{(\bs^T \! A^2 \, \bs) \, (\bs^T \bs) - (\bs^T \! A \, \bs)^2}{(\bs^T (A-\tau I) \, \bs)^2}.
\]
The result now follows from a reasoning similar to Proposition~\ref{prop:beta}.
Part (ii) can be derived by factoring out a factor of $\bs^T\bs$, $\bs^T\!A\bs$, or $\bs^T\!A^2\bs$ from the numerator or denominator. Item (iii) is straightforward.
\end{proof}


\subsection{Sensitivity of the stepsize with respect to the target} 
\label{sec:sens}
We now study the sensitivity of the steplength $\beta(\tau)$ as function of $\tau$, in particular around $\tau = 0$ and $\tau = -\infty$, which correspond to $\betaa$ and $\betab$, respectively.
We first consider the situation of small $\tau$; recall that $\beta(0) = \betab$.

\begin{proposition}
For $\tau \to 0$, we have up to higher-order terms in $\tau$
\begin{align*}
\frac{\beta(\tau)-\betab}{\betab} & = -\tau \, (\betaa - \betab).
\end{align*}
\end{proposition}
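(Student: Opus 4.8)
The plan is to start from the compact alternative expression for $\beta(\tau)$ provided by Proposition~\ref{prop:beta}(ii), namely
\[
\beta(\tau) = \betab \, \frac{\betaa \tau - 1}{\betab \tau - 1},
\]
and to perform a first-order Taylor expansion around $\tau = 0$. This form is the convenient one to pick because it is already normalized so that the rational factor equals $1$ at $\tau = 0$ (recovering $\beta(0) = \betab$ from item (iii)), which lines up directly with the left-hand side of the claimed identity once we divide by $\betab$.

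First I would expand the rational factor $g(\tau) := (\betaa \tau - 1)/(\betab \tau - 1)$. Since the denominator does not vanish at the origin (there it equals $-1$), $g$ is analytic near $\tau = 0$ and a first-order expansion is justified. Differentiating gives
\[
g'(\tau) = \frac{\betab - \betaa}{(\betab \tau - 1)^2},
\]
so that $g'(0) = \betab - \betaa$ and hence $g(\tau) = 1 + (\betab - \betaa)\,\tau + O(\tau^2)$. Equivalently, one may expand $(\betab \tau - 1)^{-1} = -\big(1 + \betab \tau + O(\tau^2)\big)$ and multiply out. Multiplying by $\betab$ then yields $\beta(\tau) = \betab + \betab(\betab - \betaa)\,\tau + O(\tau^2)$, and subtracting $\betab$ and dividing by $\betab$ gives
\[
\frac{\beta(\tau) - \betab}{\betab} = (\betab - \betaa)\,\tau + O(\tau^2) = -\tau\,(\betaa - \betab) + O(\tau^2),
\]
which is the asserted expansion up to higher-order terms in $\tau$.

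There is no genuine obstacle here: the statement is a direct consequence of a first-order Taylor computation, and the only point requiring minor care is the bookkeeping in the expansion — retaining exactly the linear term and tracking the sign. The main decision is which of the two alternative expressions in Proposition~\ref{prop:beta}(ii) to expand: using the second form above keeps the algebra self-contained, whereas starting from $\beta(\tau) = \betaa\,(\tau-\alphaa)/(\tau-\alphab)$ would force one to rewrite a factor of the shape $(\alphaa-\alphab)/(\alphaa\alphab)$ as $\betab - \betaa$, which is valid but requires invoking the reciprocal relations between the BB steplengths and their inverses. As a consistency check, the resulting sign agrees with items (iv) and (v): for small $\tau < 0$ the expansion predicts $\beta(\tau) > \betab$, while for small $\tau > 0$ it predicts $\beta(\tau) < \betab$.
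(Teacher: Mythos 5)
Your proof is correct and follows essentially the same route as the paper: both start from the second alternative expression in Proposition~\ref{prop:beta}(ii) (the paper writes it as $\betab\,\frac{1-\tau\,\betaa}{1-\tau\,\betab}$, which is your $\betab\,\frac{\betaa\tau-1}{\betab\tau-1}$ after multiplying numerator and denominator by $-1$) and expand to first order in $\tau$, the paper via the geometric-series expansion of $(1-\tau\,\betab)^{-1}$ that you mention as the equivalent alternative to your derivative computation. Your sign consistency check against items (iv) and (v) is a nice addition but not part of the paper's argument.
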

\begin{proof}
For $\tau \to 0$ it holds that (cf.~Proposition~\ref{prop:beta}(ii))
\begin{align*}
\beta(\tau) = \betab \cdot \frac{1-\tau \, \betaa}{1-\tau \, \betab} = \betab \cdot (1-\tau \, (\betaa - \betab)) + \calo(\tau^2).
\end{align*}
\end{proof}

In agreement with Figure~\ref{fig:hrq} and Proposition~\ref{prop:beta}, an appreciable interpretation of this result is that for small negative $\tau$, the stepsize $\beta(\tau)$ increases from $\betab$ (for $\tau = 0$) towards the larger stepsize $\betaa$ (corresponding to $\tau = -\infty$). Moreover, the rate of change for $\tau \to 0$ is asymptotically proportional to the difference between $\betaa$ and $\betab$.
As a side note, from \eqref{aratio} we have that
$\betaa - \betab = \betab \tan^2(\bs,\by)$.

Next, let us investigate the asymptotic situation $\tau \to \pm \infty$.
To this end, we exploit the transformed variable $\zeta = \tau^{-1}$ and consider the expression $\wh \beta(\zeta) := \beta(\zeta^{-1}) = \beta(\tau) = \frac{\zeta \ \bs^T \! A \, \bs - \bs^T\bs}{\zeta \ \bs^T \! A^2 \, \bs - \bs^T \! A \, \bs}$ for $\zeta \to 0$.

\smallskip
\begin{proposition}
For $\tau \to \pm \infty$, we have up to higher-order terms in $\tau^{-1}$
\begin{align*}
\frac{\beta(\tau)-\betaa}{\betaa} & = -\tau^{-1} \, (\alphaa - \alphab).
\end{align*}
\end{proposition}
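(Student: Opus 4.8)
The plan is to mirror the proof of the previous proposition, but now to expand around $\zeta = 0$ rather than $\tau = 0$, using the change of variables $\zeta = \tau^{-1}$ and the rational form
\[
\wh\beta(\zeta) = \frac{\zeta\,\bs^T\!A\,\bs - \bs^T\bs}{\zeta\,\bs^T\!A^2\,\bs - \bs^T\!A\,\bs}
\]
already introduced above. First I would verify the base point: evaluating at $\zeta = 0$ gives $\wh\beta(0) = \frac{\bs^T\bs}{\bs^T\!A\,\bs} = \betaa$, consistent with Proposition~\ref{prop:beta}(iii), so the claimed expansion is an expansion about the correct limit value.

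To extract the first-order behavior, I would compute the derivative of the rational function $\wh\beta$ at $\zeta = 0$ (equivalently, factor $-\bs^T\bs$ and $-\bs^T\!A\,\bs$ out of numerator and denominator and expand the resulting quotient of the form $(1 - \zeta\,\alphaa)/(1 - \zeta\,\alphab)$ as a geometric series). Either route yields
\[
\wh\beta(\zeta) = \betaa \, \bigl(1 + \zeta\,(\alphab - \alphaa)\bigr) + \calo(\zeta^2),
\]
since the leading coefficient simplifies to $\frac{(\bs^T\!A^2\,\bs)\,(\bs^T\bs) - (\bs^T\!A\,\bs)^2}{(\bs^T\!A\,\bs)\,(\bs^T\bs)} = \alphab - \alphaa$. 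Dividing by $\betaa$ and substituting back $\zeta = \tau^{-1}$ then produces the claimed identity, up to higher-order terms in $\tau^{-1}$.

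The only bookkeeping step that requires care is recognizing that the first-order coefficient equals $\alphab - \alphaa = -(\alphaa - \alphab)$; this follows immediately from the definitions $\alphaa = \frac{\bs^T\!A\,\bs}{\bs^T\bs}$ and $\alphab = \frac{\bs^T\!A^2\,\bs}{\bs^T\!A\,\bs}$. Since the computation is an elementary first-order Taylor expansion of a rational function whose denominator is nonzero at $\zeta = 0$, I do not anticipate any genuine obstacle; the result also serves as the natural symmetric counterpart to the small-$\tau$ expansion, with the roles of $(\betaa,\betab)$ and $(\alphaa,\alphab)$ interchanged.
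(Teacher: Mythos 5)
Your proposal is correct and follows essentially the same route as the paper: both pass to $\zeta = \tau^{-1}$, rewrite $\wh\beta(\zeta) = \betaa \cdot \frac{1-\zeta\,\alphaa}{1-\zeta\,\alphab}$ (the paper cites Proposition~\ref{prop:beta}(ii), you re-derive it by factoring $\bs^T\bs$ and $\bs^T\!A\,\bs$ out of the rational form), and expand to first order in $\zeta$. Your extra checks of the base point $\wh\beta(0)=\betaa$ and the sign of the coefficient $\alphab-\alphaa = -(\alphaa-\alphab)$ are correct bookkeeping that the paper leaves implicit.
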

\begin{proof}
For $\zeta \to 0$ it holds that (cf.~Proposition~\ref{prop:beta}(ii))
\begin{align*}
\wh \beta(\zeta) = \betaa \cdot \frac{1-\zeta \, \alphaa}{1-\zeta \, \alphab} = \betaa \cdot (1-\zeta \, (\alphaa - \alphab)) + \calo(\zeta^2).
\end{align*}
\end{proof}

\noindent
Again, this result has a nice meaning: for small negative $\tau^{-1}$ (i.e., large negative $\tau$), the stepsize $\beta(\tau^{-1})$ decreases from $\betaa$ (for $\tau^{-1} = 0$) towards the smaller stepsize $\betab$ (associated with $\tau=0$).
Moreover, the more $\betaa$ differs from $\betab$, the faster $\beta_k(\tau^{-1})$ decreases as function of $\tau^{-1}$.
For small positive $\tau^{-1}$ (which means large positive $\tau$), the steplength increases, and thus gets larger than $\betaa$; cf.~Figure~\ref{fig:hrq}.
In Sections~\ref{sec:target}, \ref{sec:cotan} and \ref{sec:exp} we will discuss and experiment with strategies involving both negative and positive values of $\tau$.

\subsection{Pseudocode for gradient method with TBB steps} 
In Algorithm~\ref{algo:tbb} we give a pseudocode for a gradient method based on TBB steps.
We exploit a relative stopping criterion in line~4, which may be replaced by any other reasonable stopping rule.

\begin{algorithm}
\caption{A TBB method for strictly convex quadratic functions}
\label{algo:tbb}
{\bf Input}: function $f(\bx) = \frac12 \, \bx^T\!A\bx-\bb^T\bx$ with $A$ SPD, initial guess $\bx_0$, initial stepsize $\beta_0 > 0$, tolerance {\sf tol} \\
{\bf Output}: approximation to minimizer $\argmin_{\bx} f(\bx)$ \\
\begin{tabular}{ll}
{\footnotesize 1}: & Set $\bg_0 = A\bx_0-\bb$ \\
& {\bf for} $k = 0, 1, \dots$ \\
{\footnotesize 2}: & \phantom{M} Set $\bs_k = -\beta_k \, \bg_k$ and update $\bx_{k+1} = \bx_k + \bs_k$ \\
{\footnotesize 3}: & \phantom{M} Compute the gradient $\bg_{k+1} = A\bx_{k+1}-\bb$ \\
{\footnotesize 4}: & \phantom{M} {\bf if} \ $\|\bg_{k+1}\| \le {\sf tol} \cdot \|\bg_0\|$, \ {\bf return}, \ {\bf end} \\
{\footnotesize 5}: & \phantom{M} Choose $\tau_{k+1}$, compute TBB step $\beta_{k+1}(\tau_{k+1})$ according to \eqref{hss} \\
\end{tabular}
\end{algorithm}

Clearly, the choice of targets $\tau_k$ in Line~5 is a crucial aspect of the method. We discuss some options for this particularly in Section~\ref{sec:target} and \ref{sec:cotan}.
In Section~\ref{sec:nonlin} we also consider practically important details such as the choice of $\beta_0$.

\subsection{Secant conditions} 
\label{sec:secant}
In this subsection we consider an equivalent formulation of the TBB stepsize
\begin{equation} \label{hssg}
\beta_k(\tau) = \frac{\bs_{k-1}^T \, (\by_{k-1}-\tau_k \ \bs_{k-1})}{\by_{k-1}^T \, (\by_{k-1}-\tau_k \ \bs_{k-1})},
\end{equation}
where $\by_{k-1} = A\bs_{k-1}$; this will be useful in Section~\ref{sec:nonlin} for generic problems where the Hessian changes over the iterations.

We recall from \cite{bb1988} that a justification of the BB steps is the fact that they approximate the Hessian matrix by a scalar multiple of the identity, as follows. It is reasonable that an approximation $B_k$ to the Hessian approximately satisfies the secant equation $\by_{k-1} = B_k \bs_{k-1}$.
The BB steps solve the secant equation in a least-squares sense \cite{bb1988}:
\begin{equation} \label{bbsec}
\invstep{BB1}k = \argmin_\alpha \|\by_{k-1} - \alpha \, \bs_{k-1}\|, \qquad \invstep{BB2}k = \argmin_\alpha \|\bs_{k-1} - \alpha^{-1} \, \by_{k-1}\|,
\end{equation}
which results in an approximation of the form $B_k = \alpha I$ to the Hessian, where $\alpha$ is $\invstep{BB1}k$ or $\invstep{BB2}k$, respectively.

As the TBB step \eqref{hss} involves the shifted matrix $A - \tau I$ (where $\tau$ may vary over the iterations), this suggests us to consider a shifted secant equation
\begin{equation} \label{bbsec-shift}
\by_{k-1} - \tau \, \bs_{k-1} = (B_k - \tau I)\,\bs_{k-1}.
\end{equation}
By replacing $\by_{k-1}$ by $\by_k - \tau \, \bs_{k-1}$ and $\alpha$ by $\alpha-\tau$ in the second secant condition in \eqref{bbsec}, we obtain that the TBB step satisfies a modified secant condition, which is equivalent to the second equation of \eqref{bbsec} for $\tau = 0$, but not equivalent to the first or second one for any other target value.

\begin{proposition} \label{prop:bbsec-shift}
Let $\tau \ne \invstep{BB1}k = \frac{\by_{k-1}^T\bs_{k-1}}{\bs_{k-1}^T\bs_{k-1}}$. Then the inverse TBB step $\alpha_k = \beta_k^{-1}$ satisfies
\begin{equation}
\label{secant-cond-tbb}
\alpha_k(\tau) = \frac{\by_{k-1}^T \, (\by_{k-1} - \tau \, \bs_{k-1})}{\bs_{k-1}^T \, (\by_{k-1} - \tau \, \bs_{k-1})} = \argmin_\alpha \|\bs_{k-1} - (\alpha-\tau)^{-1}(\by_{k-1} - \tau \, \bs_{k-1})\|.
\end{equation}
\end{proposition}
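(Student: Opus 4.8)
The statement has two parts: an algebraic identity expressing $\alpha_k(\tau)$ as a ratio, and a variational characterization as the argmin of a least-squares problem. The plan is to prove the argmin characterization directly and then observe that the closed-form ratio is exactly its solution.

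First I would set up the least-squares problem. Define
\[
\phi(\alpha) = \|\bs_{k-1} - (\alpha-\tau)^{-1}(\by_{k-1} - \tau \, \bs_{k-1})\|^2.
\]
The natural move is to substitute $\mu = (\alpha-\tau)^{-1}$, so that the objective becomes $\psi(\mu) = \|\bs_{k-1} - \mu \, (\by_{k-1} - \tau \, \bs_{k-1})\|^2$, a standard scalar least-squares fit of the vector $\bs_{k-1}$ onto the direction $\by_{k-1} - \tau \, \bs_{k-1}$. This is precisely the second BB least-squares problem in \eqref{bbsec} with $\by_{k-1}$ replaced by $\by_{k-1}-\tau\,\bs_{k-1}$ and $\alpha^{-1}$ replaced by $\mu=(\alpha-\tau)^{-1}$, as the text anticipates just before the statement. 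Differentiating $\psi$ and setting $\psi'(\mu)=0$ gives the orthogonal-projection solution
\[
\mu_\star = \frac{\bs_{k-1}^T \, (\by_{k-1} - \tau \, \bs_{k-1})}{(\by_{k-1} - \tau \, \bs_{k-1})^T(\by_{k-1} - \tau \, \bs_{k-1})}.
\]

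Next I would back-substitute $\alpha = \tau + \mu_\star^{-1}$ and simplify. Inverting $\mu_\star$ and adding $\tau$ yields
\[
\alpha_k(\tau) = \tau + \frac{(\by_{k-1} - \tau \, \bs_{k-1})^T(\by_{k-1} - \tau \, \bs_{k-1})}{\bs_{k-1}^T \, (\by_{k-1} - \tau \, \bs_{k-1})}.
\]
Expanding the numerator and combining over the common denominator $\bs_{k-1}^T(\by_{k-1}-\tau\,\bs_{k-1})$, the terms involving $\tau$ should telescope so that the $\tau\,\bs_{k-1}^T(\by_{k-1}-\tau\,\bs_{k-1})$ contribution from the leading $\tau$ cancels against the $-\tau\,\bs_{k-1}$ cross terms inside the squared norm, leaving exactly
\[
\alpha_k(\tau) = \frac{\by_{k-1}^T \, (\by_{k-1} - \tau \, \bs_{k-1})}{\bs_{k-1}^T \, (\by_{k-1} - \tau \, \bs_{k-1})},
\]
which is the claimed closed form. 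Equivalently, I can verify this identity directly by cross-multiplying and checking the polynomial identity in $\tau$, which avoids any sign bookkeeping. The consistency with the TBB definition then follows by recalling $\by_{k-1}=A\bs_{k-1}$ and comparing with \eqref{hrq}.

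The only genuine subtlety, and the place I would be careful, is the hypothesis $\tau \ne \invstep{BB1}k = \bs_{k-1}^T\by_{k-1}/\bs_{k-1}^T\bs_{k-1}$. This condition is exactly what guarantees the denominator $\bs_{k-1}^T(\by_{k-1}-\tau\,\bs_{k-1}) = \bs_{k-1}^T\by_{k-1} - \tau\,\|\bs_{k-1}\|^2$ is nonzero, so that $\mu_\star$ is a well-defined finite nonzero quantity and the substitution $\alpha=\tau+\mu_\star^{-1}$ is legitimate. I would flag that when $\tau=\invstep{BB1}k$ the projection of $\bs_{k-1}$ onto $\by_{k-1}-\tau\,\bs_{k-1}$ vanishes, forcing $\mu_\star=0$ and hence $\alpha\to\pm\infty$, which corresponds to the degenerate case excluded by the hypothesis. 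Apart from tracking this nondegeneracy, the argument is routine; the main obstacle is purely the algebraic simplification in the back-substitution, which I would streamline by verifying the final identity directly rather than simplifying step by step.
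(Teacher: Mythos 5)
Your proposal is correct and follows essentially the same route as the paper: the paper's proof likewise reduces to the first-order optimality condition $(\by_{k-1}-\tau\,\bs_{k-1})^T\bigl(\bs_{k-1}-(\alpha-\tau)^{-1}(\by_{k-1}-\tau\,\bs_{k-1})\bigr)=0$, with your substitution $\mu=(\alpha-\tau)^{-1}$ being exactly the reparametrization the paper itself sketches just before the proposition when it replaces $\by_{k-1}$ by $\by_{k-1}-\tau\,\bs_{k-1}$ and $\alpha$ by $\alpha-\tau$ in the second secant condition of \eqref{bbsec}. Your extra care about $\tau\ne\invstep{BB1}k$ guaranteeing $\mu_\star\ne 0$ (so the back-substitution is legitimate) is a welcome refinement that the paper leaves implicit, but it does not change the substance of the argument.
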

\begin{proof}
The result follows by setting to zero the derivative of the square of the objective function in \eqref{secant-cond-tbb} with respect to $\alpha$, which gives
\[
(\by_{k-1} - \tau \, \bs_{k-1})^T(\bs_{k-1} - (\alpha - \tau)^{-1}(\by_{k-1} - \tau \, \bs_{k-1})) = 0.
\]
\end{proof}

In addition to this interpretation as modified secant condition, when the target is located outside $[\lambda_1, \lambda_n]$, we can also think of the TBB step as the scalar least squares solution to the following problem involving a certain weighted norm, as follows. 
Define the standard weighted norm associated with a given SPD matrix $W$ by $\|\bx\|_W^2 := \bx^TW\bx$.

\begin{proposition}
\label{lemma:min-norm}
The least squares solution to the weighted secant equation satisfies
\begin{equation} \label{secant-equ}
\argmin_\alpha \| \by_{k-1} - \alpha \, \bs_{k-1}\|_W = \frac{\by_{k-1}^T \, W \, \bs_{k-1}}{\bs_{k-1}^T \, W \, \bs_{k-1}}.
\end{equation}
\end{proposition}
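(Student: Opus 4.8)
The plan is to treat this as a one-dimensional least squares problem and minimize the scalar function $\alpha \mapsto \|\by_{k-1} - \alpha\,\bs_{k-1}\|_W^2$ directly, exactly as in the unweighted derivation of the BB steps recalled in \eqref{bbsec}. First I would expand the squared weighted norm using the definition $\|\bx\|_W^2 = \bx^T W \bx$ together with the symmetry of $W$, which turns the objective into a scalar quadratic in $\alpha$:
\[
\|\by_{k-1} - \alpha\,\bs_{k-1}\|_W^2 = \by_{k-1}^T W \by_{k-1} - 2\alpha\,\bs_{k-1}^T W \by_{k-1} + \alpha^2\,\bs_{k-1}^T W \bs_{k-1}.
\]

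Next I would differentiate with respect to $\alpha$ and set the derivative to zero, obtaining the stationarity condition
\[
-2\,\bs_{k-1}^T W \by_{k-1} + 2\alpha\,\bs_{k-1}^T W \bs_{k-1} = 0.
\]
Solving for $\alpha$ and dividing by $\bs_{k-1}^T W \bs_{k-1}$ yields the claimed formula, after using $\bs_{k-1}^T W \by_{k-1} = \by_{k-1}^T W \bs_{k-1}$, which again holds by the symmetry of $W$.

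To confirm that this critical point is the global minimizer rather than a maximizer or saddle, I would observe that the coefficient of $\alpha^2$ is $\bs_{k-1}^T W \bs_{k-1}$, which is strictly positive because $W$ is SPD and $\bs_{k-1}\ne\zero$. Hence the quadratic is strictly convex and its unique stationary point is the global minimum; this same positivity also guarantees that the denominator in \eqref{secant-equ} is nonzero, so the expression is well defined.

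There is no genuine obstacle here: the computation is routine and the only point worth flagging is the implicit nondegeneracy assumption $\bs_{k-1}\ne\zero$, which is needed both for uniqueness of the minimizer and for the denominator to be meaningful. In the gradient-method setting this is automatic, since $\bs_{k-1} = -\beta_{k-1}\,\bg_{k-1}$ with a nonzero gradient and a positive stepsize.
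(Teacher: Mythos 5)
Your proposal is correct and follows essentially the same route as the paper: differentiate the scalar quadratic $\alpha \mapsto \tfrac12\,\|\by_{k-1} - \alpha\,\bs_{k-1}\|_W^2$ and set the derivative $-\by_{k-1}^T W \bs_{k-1} + \alpha\,\bs_{k-1}^T W \bs_{k-1}$ to zero. Your added remarks on strict convexity from $\bs_{k-1}^T W \bs_{k-1} > 0$ (with $W$ SPD and $\bs_{k-1}\ne\zero$) merely make explicit what the paper leaves implicit, so there is nothing to fix.
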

\begin{proof}
The result follows by setting $-\by_{k-1}^TW\bs_{k-1} + \alpha \, \bs_{k-1}^TW\bs_{k-1}$, the derivative of $\frac12 \, \| \by_{k-1} - \alpha \, \bs_{k-1}\|_W^2$ with respect to $\alpha$, to zero.
\end{proof}

The BB1 and BB2 steps can be obtained from this proposition by taking $W=I$ and $W=A$, respectively; cf.~\eqref{bbsec}. The TBB step can be derived by choosing $W = A-\tau I$ for $\tau < \lambda_1$ or $W = \tau I-A$ for $\tau > \lambda_n$, which gives an SPD weight matrix in both cases. 

In conclusion, the BB1, BB2, and TBB steps approximate the Hessian by a positive scalar multiple of the identity of the form $B_k = \alpha_k I \approx A$. The scalars satisfy one or both (weighted) secant conditions.

\subsection{Regularization} 
Another viewpoint on harmonic steps \eqref{hss} with a target $\tau$ outside $[\lambda_1, \lambda_n]$ is as {\em regularization of the Hessian}.
First consider taking a shift $\tau < 0$. As we replace $A$ by $A-\tau I$ for $\tau < 0$ this yields a ``more positive definite'' shifted Hessian.
As one indicator, the condition number
\begin{equation} \label{condnr}
\kappa(A) = \lambda_n \, / \, \lambda_1
\end{equation}
of $A$ is modified to $\frac{\lambda_n-\tau}{\lambda_1-\tau}$ by this shift.
Consider the function $\varphi: (-\infty, 0] \to [1, \infty)$ given by $\varphi(t) = \frac{\lambda_n-t}{\lambda_1-t}$. Since this function is strictly monotonically decreasing on the domain $(-\infty, 0]$, we conclude that $\kappa(A-\tau I) < \kappa(A)$.
More precisely, we have the following first-order estimate.

\begin{proposition}
For $\tau \to 0$ we have
\[
\kappa(A-\tau I) = \kappa(A) + \frac{\lambda_n-\lambda_1}{\lambda_1^2} \cdot \tau + \calo(\tau^2).
\]
\end{proposition}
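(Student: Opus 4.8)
The plan is to use the closed-form expression for the condition number of the shifted matrix that was already established in the surrounding discussion, and then to perform a first-order Taylor expansion about $\tau = 0$.

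First I would record the spectral picture. Since $A$ is SPD with eigenvalues $0 < \lambda_1 \le \cdots \le \lambda_n$, the shifted matrix $A - \tau I$ has eigenvalues $\lambda_i - \tau$. For $\tau$ close enough to $0$ we have $\tau < \lambda_1$, so these shifted eigenvalues all remain positive and keep their order; hence the extreme eigenvalues of $A - \tau I$ are $\lambda_1 - \tau$ and $\lambda_n - \tau$, and $\kappa(A - \tau I) = \varphi(\tau) = \frac{\lambda_n - \tau}{\lambda_1 - \tau}$, exactly the function $\varphi$ introduced above (which extends smoothly to a neighborhood of $0$, not only to $(-\infty,0]$, as long as $\tau < \lambda_1$).

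Next I would simply expand $\varphi$ around $\tau = 0$. The leading term is immediate: $\varphi(0) = \lambda_n/\lambda_1 = \kappa(A)$. Differentiating by the quotient rule gives $\varphi'(\tau) = \frac{\lambda_n - \lambda_1}{(\lambda_1 - \tau)^2}$, whence $\varphi'(0) = (\lambda_n - \lambda_1)/\lambda_1^2$. Inserting these into $\varphi(\tau) = \varphi(0) + \varphi'(0)\,\tau + \calo(\tau^2)$ produces the claimed identity.

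There is no genuine obstacle here; the single point deserving a moment of care is to confirm that for small $\tau$ the condition number is really given by $\varphi(\tau)$ — that is, that no eigenvalue crossing occurs and the shifted spectrum stays positive — which the bound $\tau < \lambda_1$ guarantees. The $\calo(\tau^2)$ remainder is then justified by $\varphi$ being a rational function with nonvanishing denominator, hence smooth, on a neighborhood of $0$.
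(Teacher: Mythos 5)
Your proof is correct and follows essentially the same route as the paper, which likewise reduces the claim to the linear approximation $\varphi(\tau) \approx \varphi(0) + \varphi'(0)\,\tau$ for $\varphi(\tau) = \frac{\lambda_n - \tau}{\lambda_1 - \tau}$; you merely make explicit the derivative computation and the (worthwhile) check that for $\tau < \lambda_1$ the shifted spectrum stays positive and ordered, so that $\kappa(A - \tau I)$ really equals $\varphi(\tau)$.
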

\begin{proof}
Straightforward using the linear approximation $\varphi(t) \approx \varphi(0) + \varphi'(0)\, t$.
\end{proof}

In fact, for $\tau < 0$, the shifted condition number $\kappa(A-\tau I)$ may be considerably smaller than $\kappa(A)$, especially if the Hessian is nearly singular.
In conclusion, also in view of \eqref{bbsec-shift} and Proposition~\ref{prop:bbsec-shift}, {\em harmonic stepsizes with $\tau < 0$ may be viewed as satisfying a secant condition on a regularized Hessian}. Note that $\ds \lim_{\tau \to -\infty} \kappa(A-\tau I) = 1$.

Moreover, for $\tau > \lambda_n$, we have a similar situation. It is not difficult to show that $\kappa(A-\tau I) = \kappa(A)$ when $\tau = \lambda_n+\lambda_1$ (which is usually close to $\lambda_n$). For $\tau > \lambda_n+\lambda_1$, the condition number of the shifted matrix $A-\tau I$ decreases monotonically, with the analogous property $\ds \lim_{\tau \to \infty} \kappa(A-\tau I) = 1$.


\subsection{Connections with other stepsizes} 
\label{sec:other}
We would like to point out that quotients of the form
\begin{equation} \label{prq}
\frac{\bs^T \, p(A)\, A\,\bs}{\bs^T \, p(A) \,\bs},
\end{equation}
for certain polynomials $p$, have also been considered in different contexts in \cite[(2.6)]{friedlander1998gradient}, \cite{Hoc05}, and \cite{huang2022acceleration}.
The harmonic Rayleigh quotient \eqref{hrq} is a special case of \eqref{prq}, but a very practical instance for several reasons.
First, it gives a clear connection with the harmonic Rayleigh--Ritz extraction for eigenvalue problems as seen in Section~\ref{sec:harm}.
Second, as we have seen in Section~\ref{sec:beta}, by taking first-order polynomials $p$ in $\tau$, we have a one-to-one correspondence between the target $\tau$ and the stepsize $\beta$.

The introduction of an adjustable parameter in the stepsize has been first proposed in \cite{dai2019family}, where the authors present a convex combination of BB1 and BB2 steps,
\[
\step{CON}{k}(\zeta_k) = \zeta_k \, \step{BB1}{k} + (1-\zeta_k) \, \step{BB2}{k},
\]
where $\zeta_k \in [0,1]$. Note that $\step{CON}{k}(0) = \step{BB2}{k}$ and $\step{CON}{k}(1) = \step{BB1}{k}$. Its inverse minimizes a linear combination of secant conditions, i.e.,
\[
\argmin_{\alpha} \|\zeta \, (\alpha \, \bs_{k-1}-\by_{k-1}) + (1-\zeta) \, (\bs_{k-1}-\alpha^{-1} \, \by_{k-1})\|.
\]
In \cite{dai2019family}, several strategies are considered to choose $\zeta_k$: fixed, randomly from the uniform distribution over $[0,1]$, or 
imitating the behavior of the cyclic gradient methods (cf.~\cite[pp.~56--57]{dai2019family} and references therein).
In the next section, we show a link between this convex combination steplength and the TBB step. This not only suggests relevant strategies to select $\zeta_k$, or rather $\tau_k$ for our TBB methods, but also gives a far wider range of options.

\subsection{Strategies to select targets} 
\label{sec:target}
There is a one-to-one correspondence between the parameter $\zeta_k$ in Section~\ref{sec:other} and the target $\tau_k$ in the TBB stepsize: with the choice 
\begin{equation} \label{eq:targetconv}
\tau_k = -\frac{\zeta_k}{1-\zeta_k} \, \invstep{BB2}{k},
\end{equation}
the corresponding TBB step coincides with the stepsize in \cite{dai2019family}: $\step{}{k}(\tau_k) = \step{CON}{k}(\zeta_k)$. In addition, since $\zeta_k \in [0,1]$, the corresponding target values lie in $\tau_k \in [-\infty, 0]$. From Proposition~\ref{prop:beta} we conclude that $\step{BB2}{k} \le \step{CON}{k}(\zeta_k) \le \step{BB1}{k}$. 

Given the relation between $\zeta_k$ and $\tau_k$, all strategies mentioned in \cite{dai2019family} correspond to negative targets $\tau_k$ for the TBB steplengths \eqref{hss}. In the next section, we analyze new schemes for the choice of negative targets; here we focus on positive targets $\tau_k$.
As this yields steplengths $\beta_k(\tau_k) > \step{BB1}k$, this is not equivalent to any of the stepsizes determined by $\zeta_k$ in \cite{dai2019family}. Inspired by the expression in \eqref{eq:targetconv}, where the target is a negative factor times the inverse BB2 stepsize $\alphab_k$, we consider positive targets of the form
\begin{equation}
\label{eq:t1}
\tau_k = \toneconst \, \invstep{BB2}{k}, \quad \toneconst>1.
\end{equation}
This gives us an {\em affine} (rather than convex) combination of BB1 and BB2:
\begin{equation*}
 \step{}{k}(\tau_k)= \tfrac{\toneconst}{\toneconst-1} \, \step{BB1}{k} -\tfrac{1}{\toneconst-1} \, \step{BB2}{k}.
\end{equation*}
This new stepsize is located in the right branch of the hyperbola (right plot of Figure \ref{fig:hrq}). We will make use of the following bounds for the inverse stepsize:
\[
\invstep{}{k}(\tau_k) = \tfrac{\toneconst-1}{\toneconst-\cos^2(\bs_{k-1}, \, \by_{k-1})} \, \invstep{BB1}{k} \ \in \ [\tfrac{\toneconst-1}{\toneconst} \, \invstep{BB1}{k}, \, \invstep{BB1}{k}] \ \subseteq \ [\tfrac{\toneconst-1}{\toneconst}\,\lambda_1, \ \lambda_n].
\]
As in \eqref{eq:targetconv}, we may let $\toneconst$ vary through the iterations.
In the numerical experiments, we will consider the strategy $\tau_1 = 0$ and
\begin{equation} \label{eq:t2}
\tau_k = k\,\invstep{BB2}{k}, \qquad \text{for} \ k=2,3,\dots.
\end{equation}
Since $\tau_k \ge k\,\lambda_1$ for all $k$, the sequence $\{\tau_k\}$ converges to infinity; therefore, the corresponding inverse stepsizes $\beta_k(\tau_k)$ behave asymptotically as $\invstep{BB1}{k}$. In the long run, there exists a $\toneconst > 1$ such that $\invstep{}{k}(\tau_k) \in [\frac{\toneconst-1}{\toneconst}\lambda_1, \, \lambda_n]$ for all $k > \toneconst -1$.

The inverse stepsizes obtained from \eqref{eq:t1} and \eqref{eq:t2} are bounded from below by a multiple of $\invstep{BB1}{k}$, which will play a key role in the global convergence of the resulting gradient method in Section~\ref{sec:conv}.

\section{An analysis and extension of the ABB scheme} \label{sec:abb}
We now present an analysis and ``continuous extension'' of the ABB method \cite{zhou2006abb}. We propose a new harmonic family of stepsizes, using various options for the target $\tau_k$ throughout the process. These strategies aim to retain the advantages of the ABB method, while being more flexible and tunable than the original approach. 

The motivations for this adaptation are the following. First, it is a popular method. Second, the method contains a threshold parameter $\eta$, the choice of which may be seen as a bit arbitrary.
Third, it is a good showcase of the possibilities that the harmonic framework offers.

We first briefly recall the ABB approach. The stepsize is selected as
\[
\step{ABB}k = \left\{
\begin{array}{ll}
\step{BB2}k, & \ \text{if} \ \step{BB2}k < \eta \, \step{BB1}k, \\[2mm]
\step{BB1}k, & \ \text{otherwise.}
\end{array}
\right.
\]
Here, $\eta$ is a user-selected parameter with a common choice $\eta = 0.8$; see, e.g., \cite{daniela2018steplength}.
\gf{As in the previous section, we study the strictly convex quadratic case, so that $\step{BB2}k \le \step{BB1}k$.} The ABB scheme adaptively picks BB1 and BB2 steps based on the value of $\step{BB2}k \, / \, \step{BB1}k = \cos^2(\bs_{k-1}, \by_{k-1})$. 

The idea of the ABB stepsize is to take a larger step when $\cos^2(\bs_{k-1}, \by_{k-1}) \approx 1$, and a smaller step when this is not the case. If $\cos^2(\bs_{k-1}, \by_{k-1})$ is close to $1$, this means that $\bg_{k-1}$ (or, equivalently, $\bs_{k-1}$) is close to an eigenvector of $A$ corresponding to an eigenvalue $\lambda > 0$. Thus, as we will see in Section~\ref{sec:conv}, by the step $\step{BB1}{k}$ we are particularly reducing the gradient component corresponding to $\lambda$ significantly. When we are far from an eigenvalue, we prefer to take shorter steps, such as $\step{BB2}{k}$, since we aim to reduce several gradient components; this fosters the gradient method to take a new longer step in the next iterations.
There exist several variants of the ABB method; the interested reader may refer to \cite{frassoldati2008new, zhou2006abb, bonettini2009scaled} for such ideas. 

\subsection{A theoretical foundation for the ABB method} 
As discussed, a key statement for the ABB method is: ``if $\cos(\bs_{k-1},\by_{k-1}) \approx 1$, then $\bs_{k-1}$ is close to an eigenvector'' (see, e.g., \cite[pp.~179--180]{daniela2018steplength}).
The following new result quantifies this statement for the quadratic case.
We need the assumption that $\lambda$ is a simple eigenvalue, since otherwise an eigenvector is not uniquely defined.
We consider the situation $\cos^2(\bs, A\bs) \approx 1$, so that $\sin(\bs, A\bs)$ is small.

\begin{proposition} 
\label{prop:abb-cos}
Let $(\lambda, \bx)$ be an eigenpair of $A$, where $\lambda$ is a simple eigenvalue. Let $\bs \approx \bx$ be an approximate eigenvector.
Then, up to higher-order terms in $\angle(\bs,\bx)$,
\begin{equation*}
\frac{\lambda}{\max_{\lambda_i \ne \lambda} \vert\lambda_i-\lambda\vert} \cdot \sin(\bs, A\bs) \lesssim \tan(\bs,\bx) \lesssim \frac{\lambda}{\min_{\lambda_i \ne \lambda} \vert\lambda_i-\lambda\vert} \cdot \sin(\bs, A\bs).
\end{equation*}
\end{proposition}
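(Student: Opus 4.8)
The plan is to work in the orthonormal eigenbasis of $A$. Let $\{\bx_1, \dots, \bx_n\}$ be orthonormal eigenvectors with $A\bx_i = \lambda_i\bx_i$, and suppose $\bx = \bx_j$ with $\lambda = \lambda_j$. I write the unit-normalized approximate eigenvector as $\bs = \sum_i c_i\,\bx_i$, so that $\sum_i c_i^2 = 1$. Since $\bs \approx \bx_j$, the coefficient $c_j$ is close to $1$ and the remaining coefficients are small; indeed $\sin^2(\bs,\bx) = \sum_{i\ne j}c_i^2$ and $\cos^2(\bs,\bx) = c_j^2$, so that
\[
\tan^2(\bs,\bx) = \frac{\sum_{i\ne j}c_i^2}{c_j^2} \approx \sum_{i\ne j}c_i^2,
\]
and the quantity $\sum_{i\ne j}c_i^2$ plays the role of the small parameter, being of order $\angle(\bs,\bx)^2$.

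The core of the argument is to obtain a matching leading-order expression for $\sin^2(\bs,A\bs)$ in the same coordinates. I would use the Lagrange-type identity
\[
\|\bs\|^2\,\|A\bs\|^2 - (\bs^T\!A\bs)^2 = \tfrac12\sum_{i,l}(\lambda_i-\lambda_l)^2\,c_i^2\,c_l^2,
\]
which follows by expanding both sides in the eigenbasis and symmetrizing. Dividing by $\|\bs\|^2\,\|A\bs\|^2$ gives an exact formula for $\sin^2(\bs,A\bs)$. In the regime $c_j \to 1$, the dominant contribution to the double sum comes from the terms containing the index $j$, while the terms with both indices different from $j$ are of order $(\sum_{i\ne j}c_i^2)^2$, hence higher order. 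Using in addition $\|A\bs\|^2 \to \lambda^2$ and $c_j^2 \to 1$, this yields
\[
\sin^2(\bs,A\bs) \approx \frac{1}{\lambda^2}\sum_{i\ne j}(\lambda_i-\lambda)^2\,c_i^2.
\]

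Comparing the two expressions, the ratio $\sin^2(\bs,A\bs)\,/\,\tan^2(\bs,\bx)$ equals, to leading order, $\lambda^{-2}$ times a convex combination of the values $(\lambda_i-\lambda)^2$ with $i\ne j$. Sandwiching this convex combination between its smallest and largest entries gives
\[
\frac{\min_{\lambda_i\ne\lambda}(\lambda_i-\lambda)^2}{\lambda^2} \lesssim \frac{\sin^2(\bs,A\bs)}{\tan^2(\bs,\bx)} \lesssim \frac{\max_{\lambda_i\ne\lambda}(\lambda_i-\lambda)^2}{\lambda^2},
\]
and taking square roots and rearranging produces exactly the two stated inequalities.

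The routine parts are the verification of the Lagrange identity and the convex-combination sandwich. The main obstacle is the asymptotic bookkeeping: I must check that the discarded terms in both $\sin^2(\bs,A\bs)$ and $\tan^2(\bs,\bx)$ are genuinely of higher order in $\angle(\bs,\bx)$, and in particular that the common leading factor $\sum_{i\ne j}c_i^2$ cancels cleanly in the ratio, so that the ``$\lesssim$'' relations hold to first order as claimed. The simplicity hypothesis on $\lambda$ enters precisely here, guaranteeing that $\min_{\lambda_i\ne\lambda}|\lambda_i-\lambda| > 0$ (so the upper bound is finite) and that the eigenvector $\bx$ against which the angle is measured is well defined.
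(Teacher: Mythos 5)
Your proof is correct, and it arrives at the same leading-order identity as the paper --- namely $\lambda^2\sin^2(\bs,A\bs)\approx\sum_{\lambda_i\ne\lambda}(\lambda_i-\lambda)^2c_i^2$ --- but by a different computational device. The paper normalizes the component along $\bx$ to one, writing $\bs=[1,\ \bz]^T$ with $A=\mathrm{diag}(\lambda,\Lambda)$, so that $\tan(\bs,\bx)=\|\bz\|$ holds \emph{exactly}; it then Taylor-expands the quotient defining $\cos^2(\bs,A\bs)$ (two geometric-series expansions of the denominator factors) to obtain $\lambda\,\sin(\bs,A\bs)\approx\|(\Lambda-\lambda I)\,\bz\|$ up to $\calo(\|\bz\|^4)$-terms, and finishes with the sandwich $\min_{\lambda_i\ne\lambda}|\lambda_i-\lambda|\,\|\bz\|\le\|(\Lambda-\lambda I)\,\bz\|\le\max_{\lambda_i\ne\lambda}|\lambda_i-\lambda|\,\|\bz\|$, which is exactly your convex-combination step in disguise. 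You instead normalize $\|\bs\|=1$ and invoke the Lagrange identity $\|\bs\|^2\|A\bs\|^2-(\bs^T\!A\bs)^2=\frac12\sum_{i,l}(\lambda_i-\lambda_l)^2c_i^2c_l^2$ to get an \emph{exact} expression for $\sin^2(\bs,A\bs)$ before any asymptotics; this buys transparent error bookkeeping, since the discarded terms (both indices off the eigendirection) form a manifestly nonnegative sum of order $\sin^4(\bs,\bx)$, whereas the paper must track several $\calo(\|\bz\|^4)$ remainders through products of expansions. The small extra cost on your side is the additional approximation $\tan^2(\bs,\bx)=\bigl(\sum_{i\ne j}c_i^2\bigr)/c_j^2\approx\sum_{i\ne j}c_i^2$, which the paper's normalization avoids, but this is harmless at the stated first order; your closing observation about where simplicity of $\lambda$ enters (positivity of the minimum gap and well-definedness of $\bx$) matches the paper's hypothesis precisely.
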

\begin{proof}
Without loss of generality we may assume that $A = \text{diag}(\lambda, \Lambda)$, where $\Lambda$ is an $(n-1) \times (n-1)$ diagonal matrix containing all eigenvalues different from $\lambda$, and that $\bs$ is of the form $[1, \ \bz]^T$,
a perturbation of $\bx$, the first canonical basis vector. This means $\tan(\bs,\bx) = \|\bz\|$; our goal is to connect this quantity to $\angle(\bs, A\bs)$ via $\sin(\bs, A\bs)$.
We have $As = [\lambda, \ \Lambda\,\bz]^T$,
and
\[
\cos^2(\bs,A\bs) = \frac{(\bs^T\!A\bs)^2}{\|\bs\|^2 \, \|A\bs\|^2} = \frac{(1 + \lambda^{-1} \, \bz^T\Lambda\,\bz)^2}{(1+\|\bz\|^2) \, (1 + \lambda^{-2} \, \|\Lambda\,\bz\|^2)}.
\]
We now twice use the Taylor expansion $(1-t)^{-1} = 1+t + \calo(t^2)$ for small $t$, so that
\[
\begin{array}{ll}
(1+\|\bz\|^2)^{-1} & = 1 - \|\bz\|^2 + \calo(\|\bz\|^4), \\[2mm]
(1+\lambda^{-2} \, \|\Lambda\,\bz\|^2)^{-1} & = 1 - \lambda^{-2} \, \|\Lambda\,\bz\|^2 + \calo(\|\bz\|^4).
\end{array}
\]
This yields that
\[
\begin{array}{ll}
\sin^2(\bs, A\bs) & 
= \gf{1 - (1+2\lambda^{-1} \, \bz^T\Lambda\,\bz) \, (1-\|\bz\|^2) \, (1-\lambda^{-2} \, \|\Lambda \, \bz\|^2) + \calo(\|\bz\|^4)} \\[1mm]
& = \|\bz\|^2 + \lambda^{-2} \, \|\Lambda\,\bz\|^2 - 2\lambda^{-1} \, \bz^T\Lambda\,\bz + \calo(\|\bz\|^4) \\[1mm]
& = \|\bz-\lambda^{-1}\, \Lambda\,\bz\|^2 + \calo(\|\bz\|^4).
\end{array}
\]
Multiplication by $\lambda > 0$ and omitting $\calo(\|\bz\|^4)$-terms gives $\|\Lambda\,\bz-\lambda \bz\| = \lambda \, \sin(\bs, A\bs)$.
The result now follows from
\[
\min_{\lambda_i \ne \lambda} \vert\lambda_i-\lambda\vert \ \|\bz\| \le \|\Lambda\,\bz-\lambda \bz\| \le \max_{\lambda_i \ne \lambda} \vert\lambda_i-\lambda\vert \ \|\bz\|.
\]
\end{proof}

Next, we investigate the sensitivity of the BB steps for quadratic problems where the direction is close to an eigenvector.

\begin{proposition} \label{prop:sens-bb}
Let $\bs$ be an approximation of an eigenvector $\bx$ corresponding to a simple eigenvalue $\lambda$.
Up to higher-order terms in $\angle(\bs,\bx)$, we have 
\begin{align*}
\frac{\vert\step{BB1}{k} - \lambda^{-1}\vert}{\lambda^{-1}} & \lesssim \frac{\max_i \vert\lambda_i-\lambda\vert}{\lambda} \cdot \tan^2(\bs,\bx), \\[2mm]
\frac{\vert\step{BB2}{k} - \lambda^{-1}\vert}{\lambda^{-1}} & \lesssim 
\frac{\max_i \vert\lambda_i\,(\lambda_i-\lambda)\vert}{\lambda^2} \cdot \tan^2(\bs,\bx).
\end{align*}
\end{proposition}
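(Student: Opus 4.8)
The plan is to reuse the normalization from the proof of Proposition~\ref{prop:abb-cos}: without loss of generality I take $A = \text{diag}(\lambda, \Lambda)$, where $\Lambda$ is the diagonal matrix collecting the eigenvalues $\lambda_i \ne \lambda$, and write $\bs = [1, \ \bz]^T$ as a perturbation of $\bx$, the first canonical basis vector, so that $\tan(\bs,\bx) = \|\bz\|$. Both BB steps then become explicit rational functions of $\bz$, and the relative errors can be extracted by a first-order Taylor expansion in $\|\bz\|^2$, exactly the same mechanism used in the previous proposition.

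For the BB1 step I would use $\bs^T\bs = 1 + \|\bz\|^2$ and $\bs^T\!A\bs = \lambda + \bz^T\Lambda\bz$, so that
\[
\lambda \, \step{BB1}{k} = \frac{1 + \|\bz\|^2}{1 + \lambda^{-1} \, \bz^T\Lambda\bz}.
\]
Expanding the denominator via $(1+t)^{-1} = 1 - t + \calo(t^2)$ and collecting terms gives $\lambda\,\step{BB1}{k} - 1 = \lambda^{-1} \, \bz^T(\lambda I - \Lambda)\,\bz + \calo(\|\bz\|^4)$. Since $\lambda I - \Lambda$ is diagonal with entries $\lambda - \lambda_i$, the quadratic form is bounded in modulus by $\max_i|\lambda_i-\lambda| \cdot \|\bz\|^2$ (the index $\lambda_i = \lambda$ contributes zero and is harmless), which yields the first inequality after dividing by $\lambda^{-1}$ and substituting $\|\bz\| = \tan(\bs,\bx)$.

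The BB2 step is handled identically, now with the extra factor from $A^2$: using $\bs^T\!A^2\bs = \lambda^2 + \|\Lambda\bz\|^2$ one gets
\[
\lambda \, \step{BB2}{k} = \frac{1 + \lambda^{-1} \, \bz^T\Lambda\bz}{1 + \lambda^{-2} \, \|\Lambda\bz\|^2},
\]
and the same expansion leads to $\lambda\,\step{BB2}{k} - 1 = \lambda^{-2} \, \bz^T\Lambda(\lambda I - \Lambda)\,\bz + \calo(\|\bz\|^4)$. The diagonal matrix $\Lambda(\lambda I - \Lambda)$ has entries $\lambda_i(\lambda - \lambda_i)$, so the quadratic form is bounded by $\max_i|\lambda_i(\lambda_i-\lambda)| \cdot \|\bz\|^2$, delivering the second inequality.

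I do not expect a genuine obstacle: both bounds reduce to estimating a quadratic form $\bz^T D \bz$ by the largest diagonal entry of $D$ times $\|\bz\|^2$, just as in Proposition~\ref{prop:abb-cos}. The only care needed is the bookkeeping in the two Taylor expansions, in particular verifying that the linear terms in $\bz$ vanish (they must, since $\bx$ is an exact eigenvector, making $\lambda^{-1}$ the exact step at $\bz = \zero$) so that the leading contribution is truly quadratic, and tracking that every neglected term is $\calo(\|\bz\|^4) = \calo(\tan^4(\bs,\bx))$, consistent with the ``up to higher-order terms'' qualification in the statement.
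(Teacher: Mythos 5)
Your proposal is correct and follows essentially the same route as the paper's own proof: the same normalization $A = \mathrm{diag}(\lambda,\Lambda)$, $\bs = [1,\ \bz]^T$ carried over from Proposition~\ref{prop:abb-cos}, the same first-order expansions giving $\lambda\,\step{BB1}{k} = 1 + \bz^T(I-\lambda^{-1}\Lambda)\,\bz + \calo(\|\bz\|^4)$ and $\lambda\,\step{BB2}{k} = 1 + \lambda^{-2}\,\bz^T\Lambda(\lambda I-\Lambda)\,\bz + \calo(\|\bz\|^4)$, and the same bounding of the diagonal quadratic forms. You merely make explicit the final estimate $|\bz^T D\,\bz| \le \max_i |d_i|\,\|\bz\|^2$, which the paper leaves implicit.
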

\begin{proof}
With the same notation as in the proof of Proposition~\ref{prop:abb-cos}, we have for the BB1 step
\[
\frac{\bs^T\bs}{\bs^T\!A\,\bs} = \lambda^{-1} \, \frac{1+\bz^T\bz}{1 + \lambda^{-1} \, \bz^T\Lambda\,\bz}
= \lambda^{-1} \, (1 + \bz^T (1-\lambda^{-1} \Lambda)\,\bz) + \calo(\|\bz\|^4),
\]
and for the BB2 step
\[
\frac{\bs^T\!A\,\bs}{\bs^T\!A^2\,\bs} = \frac{\lambda+\bz^T\Lambda\,\bz}{\lambda^2 + \bz^T\Lambda^2\,\bz}
= \lambda^{-1} \, (1 + \bz^T \lambda^{-1}\Lambda \, (I-\lambda^{-1} \Lambda)\,\bz) + \calo(\|\bz\|^4).
\]
\end{proof}

Comparing these two upper bounds, we conclude that the one for the BB2 steps may be larger by a factor
$\lambda_n / \lambda$, which is close to $\kappa(A)$ for small $\lambda$ and may therefore be very large.
As a result, one interpretation of this proposition is that, indeed, it may be a good idea to take BB1 steps rather than BB2 steps if $\bs$ is close to an eigenvector, since BB2 steps are more sensitive with respect to perturbations in that direction, in view of the extra factor in the upper bound.
This seems especially relevant for small $\lambda$, corresponding to large stepsizes.
Therefore, this result forms a clear mathematical motivation for the ABB scheme.

\subsection{Sensitivity of $\beta(\tau)$ with respect to $\bs$} 
The following result is an extension of Proposition~\ref{prop:sens-bb} to the harmonic step with target \eqref{hss}.
We will see that it reduces to Proposition~\ref{prop:sens-bb} in the case of $\tau = 0$ or $\tau \to \pm \infty$.

\begin{proposition}
Let $\bs$ be an approximation of an eigenvector $\bx$ corresponding to a simple eigenvalue $\lambda$.
Up to higher-order terms in $\angle(\bs,\bx)$, we have 
\[
\frac{\vert\beta(\tau) - \lambda^{-1}\vert}{\lambda^{-1}} \lesssim \max_i \frac{\vert\lambda_i-\tau\vert \cdot \vert\lambda_i-\lambda\vert}{\vert\lambda-\tau\vert \cdot \lambda} \cdot \tan^2(\bs,\bx).
\]
\end{proposition}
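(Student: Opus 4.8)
The plan is to follow exactly the strategy of the proof of Proposition~\ref{prop:sens-bb}, reusing the normalization introduced in the proof of Proposition~\ref{prop:abb-cos}. Without loss of generality I would take $A = \text{diag}(\lambda, \Lambda)$, with $\Lambda$ the $(n-1)\times(n-1)$ diagonal matrix of the eigenvalues $\lambda_i \ne \lambda$, and write the perturbed eigenvector as $\bs = [1, \ \bz]^T$, so that $\tan(\bs,\bx) = \|\bz\|$. The goal is then to expand the quadratic-form expression \eqref{hss} for $\beta(\tau)$ to second order in $\bz$.

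First I would evaluate the numerator and denominator of $\beta(\tau) = \frac{\bs^T(A-\tau I)\bs}{\bs^T(A-\tau I)A\bs}$ componentwise. Using $A\bs = [\lambda, \ \Lambda\bz]^T$, this gives
\[
\bs^T(A-\tau I)\bs = (\lambda-\tau) + \bz^T(\Lambda-\tau I)\bz, \qquad \bs^T(A-\tau I)A\bs = \lambda(\lambda-\tau) + \bz^T\Lambda(\Lambda-\tau I)\bz.
\]
I would then factor $(\lambda-\tau)$ out of the numerator and $\lambda(\lambda-\tau)$ out of the denominator, and apply the expansion $(1+t)^{-1} = 1-t+\calo(t^2)$, exactly as in the proofs above, to obtain $\beta(\tau) = \lambda^{-1}\,(1 + q(\bz)) + \calo(\|\bz\|^4)$ for a single quadratic form $q$.

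The key algebraic step is to combine the two surviving first-order terms into one. Collecting them over the common factor $\lambda(\lambda-\tau)$, and using that the diagonal matrices $\Lambda-\tau I$ and $\lambda I-\Lambda$ commute, the bracket $\lambda(\Lambda-\tau I) - \Lambda(\Lambda-\tau I)$ factors as $(\lambda I-\Lambda)(\Lambda-\tau I)$, so that
\[
\frac{\beta(\tau)-\lambda^{-1}}{\lambda^{-1}} = \frac{\bz^T(\lambda I-\Lambda)(\Lambda-\tau I)\,\bz}{\lambda\,(\lambda-\tau)} + \calo(\|\bz\|^4).
\]
Taking absolute values and bounding the quadratic form by its largest diagonal entry in modulus, namely $\max_{\lambda_i\ne\lambda}\vert\lambda_i-\lambda\vert\,\vert\lambda_i-\tau\vert$ times $\|\bz\|^2$, and recalling $\|\bz\|^2 = \tan^2(\bs,\bx)$ together with $\lambda>0$, yields the claimed estimate (the $\lambda_i=\lambda$ contribution being zero, there is no harm in writing $\max_i$).

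I expect the combination of the two first-order terms, and recognizing the factorization $(\lambda I-\Lambda)(\Lambda-\tau I)$, to be the only delicate point; everything else is a direct transcription of the earlier proofs. As a consistency check I would verify the two limiting cases: setting $\tau=0$ collapses the bound to $\frac{\max_i\vert\lambda_i(\lambda_i-\lambda)\vert}{\lambda^2}\cdot\tan^2(\bs,\bx)$, the BB2 estimate of Proposition~\ref{prop:sens-bb}, while letting $\tau\to\pm\infty$ sends $\vert\lambda_i-\tau\vert/\vert\lambda-\tau\vert\to 1$ and recovers the BB1 estimate $\frac{\max_i\vert\lambda_i-\lambda\vert}{\lambda}\cdot\tan^2(\bs,\bx)$, consistent with Proposition~\ref{prop:beta}(iii).
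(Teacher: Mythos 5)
Your proof is correct and essentially identical to the paper's: it uses the same normalization $A = \text{diag}(\lambda,\Lambda)$, $\bs = [1,\ \bz]^T$ inherited from Propositions~\ref{prop:abb-cos} and~\ref{prop:sens-bb}, the same first-order Taylor expansion of the two quadratic forms, and your resulting term $\bz^T(\lambda I-\Lambda)(\Lambda-\tau I)\,\bz\,/\,(\lambda\,(\lambda-\tau))$ coincides with the paper's $(\lambda-\tau)^{-1}\,\bz^T(\Lambda-\tau I)(I-\lambda^{-1}\Lambda)\,\bz$ upon writing $I-\lambda^{-1}\Lambda = \lambda^{-1}(\lambda I-\Lambda)$. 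Your limiting-case checks at $\tau=0$ and $\tau\to\pm\infty$ are a sensible sanity addition, matching the remark the paper itself makes right after the proposition, but they do not change the route.
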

\begin{proof}
With the notation as in Proposition~\ref{prop:sens-bb}, and using similar techniques we get
\begin{align*}
\beta(\tau) & =
\frac{\lambda-\tau+\bz^T(\Lambda-\tau I)\,\bz}{\lambda\,(\lambda-\tau) + \bz^T\Lambda \, (\Lambda-\tau I)\,\bz} \\[1mm]
& = \lambda^{-1} \, (1 + (\lambda-\tau)^{-1} \, \bz^T \, (\Lambda-\tau I) \, (I-\lambda^{-1} \Lambda)\,\bz) + \calo(\|\bz\|^4).
\end{align*}
\end{proof}

Interestingly, the factor $\vert\lambda_i-\tau\vert \, / \, \vert\lambda-\tau\vert$ converges to $1$ for $\tau \to \pm \infty$, which reduces to the first inequality in Proposition~\ref{prop:sens-bb}; this corresponds to the BB1 step, the inverse Rayleigh quotient.
When $\tau \to 0$, this factor converges to that in the second inequality in Proposition~\ref{prop:sens-bb}; this corresponds to the BB2 step, the inverse harmonic Rayleigh quotient for zero target.

\subsection{A new family of stepsizes} 
\label{sec:cotan}
The ABB strategy may be viewed as ``discrete'', in the sense that just two types of stepsizes are possible: the BB1 or the BB2 step.
We will now propose a new ``continuous'' variant of ABB parameterized by choosing appropriate $\tau_k$.
We design this strategy to have a similar behavior as ABB: when $\cos^2(\bs_{k-1}, \by_{k-1}) \approx 1$, the steps are close to the BB1 step, while the steps should be close to the BB2 step when $\cos^2(\bs_{k-1}, \, \by_{k-1}) \approx 0$. Therefore, we are interested in a function of $\cos(\bs_{k-1}, \, \by_{k-1})$ such that when $\tau_k \to -\infty$ we recover BB1, while $\tau_k = 0$ yields the BB2 step. One choice to attain this is to use a cotangent function:
\[
\tau_k = -\cot(\bs_{k-1}, \, \by_{k-1}).
\]
Indeed, this choice has the two types of desired asymptotic behavior.
To further tune the speed by which we approach the two BB steps when $\cos(\bs_{k-1}, \by_{k-1})$ approaches 0 or 1, we will also introduce two extra parameters, and consider
\begin{equation} \label{cot}
\tau_k = -\frac{\cos^q(\bs_{k-1}, \, \by_{k-1})}{\sin^r(\bs_{k-1}, \, \by_{k-1})},
\end{equation}
for $q$, $r > 0$.
For example, if we want our gradient method to have shorter steps more often than long ones, we may keep $r = 1$ but select a higher value of $q$, e.g., $q = 2$. This mimics the effect of setting $\eta$ relatively close to $1$, as it is done in \cite{daniela2018steplength}.
The following result ensures that this ``cotangent step'' \eqref{cot} indeed may be regarded as a continuous extension of the ABB step, having similar properties for $\angle(\bs, \by)$ close to 0 or $\pi/2$.

\begin{proposition}
For $q, r > 0$, consider $\beta(\tau)$ as in \eqref{hss}, where $\tau$ is defined by \eqref{cot}. We have that $\beta(\tau) \to \betaa$ when $\angle(\bs, \by) \to 0$ and $\beta(\tau) \to \betab$ when $\angle(\bs, \by) \to \pi/2$.
Moreover, $\beta(\tau)$ is a decreasing function of $\angle(\bs, \by)$.
\end{proposition}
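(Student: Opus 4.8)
The plan is to treat $\beta$ as a composition: $\beta$ is a function of the target $\tau$, and through \eqref{cot} the target is itself a function of the angle $\phi := \angle(\bs,\by) = \angle(\bs, A\bs)$. Because $A$ is SPD we have $\bs^T\!A\bs > 0$, hence $\cos(\bs,\by) > 0$ and $\phi \in [0,\pi/2)$; assuming $\bs$ is not a multiple of an eigenvector (otherwise $\beta$ is constant), $\phi \in (0, \pi/2)$. Throughout this range $\cos^q\phi > 0$ and $\sin^r\phi > 0$, so $\tau(\phi) = -\cos^q\phi / \sin^r\phi$ is strictly negative. In particular $\tau$ never reaches the pole $\alphab > 0$ of $\beta$ and stays on the branch $(-\infty, 0) \subset (-\infty, \alphab)$ on which Proposition~\ref{prop:beta}(i) guarantees that $\beta$ is defined and strictly decreasing. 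I would verify this non-degeneracy explicitly at the outset, since it is what keeps us on a single monotone branch.

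For the two limiting values I would simply read off the endpoint behavior of $\tau(\phi)$. As $\phi \to 0$ we have $\cos^q\phi \to 1$ and $\sin^r\phi \to 0^+$, so $\tau \to -\infty$; by Proposition~\ref{prop:beta}(iii) this gives $\beta \to \betaa$. As $\phi \to \pi/2$ we have $\cos^q\phi \to 0^+$ and $\sin^r\phi \to 1$, so $\tau \to 0^-$; using the continuity of $\beta$ at $0$ together with $\beta(0) = \betab$ from Proposition~\ref{prop:beta}(iii), this gives $\beta \to \betab$.

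For the monotonicity claim, the key observation is that $\tau(\phi)$ is strictly increasing on $(0,\pi/2)$: as $\phi$ increases the numerator $\cos^q\phi$ decreases while the positive denominator $\sin^r\phi$ increases, so the positive quotient $\cos^q\phi/\sin^r\phi$ decreases and hence $\tau = -\cos^q\phi/\sin^r\phi$ increases. Composing the strictly increasing map $\phi \mapsto \tau$ with the strictly decreasing map $\tau \mapsto \beta(\tau)$ (Proposition~\ref{prop:beta}(i) on $(-\infty,0)$) shows that $\beta$ is a strictly decreasing function of $\phi = \angle(\bs,\by)$, which in particular yields the stated (non-strict) monotonicity. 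Proposition~\ref{prop:beta}(iv) then provides a consistency check, since it confines $\beta(\tau)$ to $(\betab,\betaa)$ over exactly this range of negative targets.

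I do not anticipate a serious obstacle; the argument is a composition of two monotone maps, and the only point requiring genuine care is confirming that all generated targets remain strictly negative so that we avoid the pole at $\alphab$ and never cross onto the other monotone branch of $\beta$. As an alternative one could differentiate directly, computing $\frac{d}{d\phi}\beta(\tau(\phi)) = \beta'(\tau)\,\tau'(\phi)$ and signing it via the formula for $\beta'$ from the proof of Proposition~\ref{prop:beta}, but the composition-of-monotone-maps argument is cleaner and avoids the routine differentiation of $\tau(\phi)$.
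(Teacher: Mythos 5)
Your proposal is correct and takes essentially the same route as the paper's proof: the paper likewise observes that $\tau$ from \eqref{cot} is a strictly increasing function of $\angle(\bs,\by)$ on $(0,\frac{\pi}{2})$, ranging from $-\infty$ to $0$, and then composes this with the monotonicity and limit properties of $\beta(\tau)$ from Proposition~\ref{prop:beta}. Your explicit check that all targets stay strictly negative, hence away from the pole at $\alphab$ and on a single monotone branch, merely makes precise what the paper leaves implicit.
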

\begin{proof}
Since $\sin$ is strictly increasing and $\cos$ is strictly decreasing on $(0,\frac{\pi}2)$, the function $\tau$ defined by \eqref{cot} is a strictly increasing function of $\angle(\bs, \by)$, ranging from $-\infty$ for $\angle(\bs, \by) \to 0$ to $0$ for $\angle(\bs, \by) = \frac{\pi}2$.
Therefore, in combination with Section~\ref{sec:beta}, we conclude that $\beta$ decreases from $\betaa$ to $\betab$.
\end{proof}

We point out that in the quadratic case the targets \eqref{cot} are negative, which implies that the corresponding stepsize has the same bounds as $\step{CON}{k}$ in Section~\ref{sec:target}: $\step{BB1}{k} \le \step{}{k}(\tau_k) \le \step{BB2}{k}$. We will test various choices for $q, r > 0$ in the experiments in Section~\ref{sec:exp}.

\section{Convergence analysis} \label{sec:conv}

In this section, we extend a few results on BB steps \gf{for strictly convex quadratics} to the TBB step with the choices for the target $\tau_k$ described in Sections \ref{sec:target} and \ref{sec:cotan}. Global convergence of the gradient method with BB steps has been proven by Raydan \cite{raydan1993barzilai} for strictly convex quadratic functions. Dai and Liao \cite{dai2002r} and Dai \cite{dai2003alternate} show R-linear convergence of the method for a class of BB stepsizes. We follow \cite[Thm.~4.1]{dai2003alternate} and \cite[Thm.~2.5]{dai2002r} to extend the results to the TBB steps.

Before stating the assumptions that the TBB stepsize must satisfy, we introduce some expressions that will be useful for the following results. \gf{Since the Hessian is fixed through the iterations, it is sensible to} decompose the gradient along an orthonormal basis of eigenvectors of $A$. Let $\bv_1, \dots, \bv_n$ be the (orthonormal) eigenvectors associated with the eigenvalues $\lambda_1, \dots, \lambda_n$. The gradient can be expressed as linear combination of these eigenvectors (cf., e.g., \cite[(2.2)]{dai2002r})
\[
g_k = \sum_{i=1}^n \gradcomp{i}k \, \bv_i.
\]
Therefore, the TBB step expressed in the eigenvalues of $A$ and the $\gradcomp{i}{k-1}$ is
\begin{equation} \label{tau-lambda}
\invstep{}{k}(\tau) = \frac{\sum_i (\gradcomp{i}{k-1})^2\,\lambda_i\,(\lambda_i - \tau)}{\sum_i (\gradcomp{i}{k-1})^2\,(\lambda_i - \tau)},
\end{equation}
provided the denominator is nonzero. In particular, for $\tau \to \pm\infty$, this stepsize converges to the BB1 stepsize, which is expressed as (cf., e.g., \cite[Eq.~(2.18)]{dai2002r})
\begin{equation} \label{tau-lambda-bb1}
\invstep{BB1}{k} = \frac{\sum_i \, (\gradcomp{i}{k-1})^2\ \lambda_i}{\sum_i \, (\gradcomp{i}{k-1})^2}.
\end{equation}
For strictly convex quadratic functions, we have the following recursive formula for the gradients $g_k$ and, as a consequence, for their coefficients (cf., e.g., \cite[Eq.~(8)]{raydan1993barzilai})
\begin{align}
\label{recursive-err}
\bg_{k+1} &= (I - \step{}{k} A)\,\bg_k, \\
\label{recursive-coefs}
\gradcomp{i}{k+1} &= (1 - \step{}{k} \lambda_i)\, \gradcomp{i}k.
\end{align}
Equations \eqref{recursive-err}--\eqref{recursive-coefs} can also be applied to the error $\be_k = \bx_k - A^{-1}\bb$ and its components $e_i^k$ in the directions of the eigenvectors. Using these equations, the following results can be obtained for the error components as well. The only complication is that higher powers of the eigenvalues appear:
\[
\invstep{}{k}(\tau) = \frac{\sum_i \, (e_i^{k-1})^2\,\lambda_i^3\,(\lambda_i - \tau)}{\sum_i \, (e_i^{k-1})^2\,\lambda_i^2\,(\lambda_i - \tau)},
\]
since $\step{}{k}\bs_k = A\be_k$ for all $k$. This expression extends \cite[Eq.~(12) and p.~325]{raydan1993barzilai}. To prove the convergence of the gradient method for strictly convex quadratic functions, it is sufficient to show that $\|\bg_k\| \to 0$. Since we chose an orthonormal basis, $\|\bg_k\|^2 = \sum_i (\gradcomp{i}{k})^2$ and thus we aim to show $\gradcomp{i}{k}\to 0$ for $i=1,\dots,n$. We remark that working with the $\gradcomp{i}{k}$ is equivalent to assuming that our Hessian matrix $A$ is diagonal.

\subsection{Assumptions for the TBB stepsize} 
We state the assumptions on the TBB stepsize, needed to get R-linear convergence. We adapt \cite[Property~A]{dai2003alternate} to the TBB steplengths proposed in Sections~\ref{sec:target} and \ref{sec:cotan}:

\begin{assumption}
\label{assumption}
The inverse stepsize $\alpha_{k}$ satisfies Assumption~\ref{assumption} if there exist positive constants $\xia \in (\frac12, 1]$, $\xib\ge 1$ and $M_2$ such that, for any $k$,
\begin{itemize}
\item[(i)] $\xia \cdot \lambda_1\le\invstep{}{k} \le \xib \cdot \lambda_n$;
\item[(ii)] for any $1 \le \ell \le n-1$ and $\eps > 0$, if $\sum_{i=1}^\ell (\gradcomp{i}{k-1})^2 \le \eps$ and $(\gradcomp{\ell+1}{k-1})^2\ge M_2 \, \eps$, then $\alpha_{k} \ge \frac{\xia}{\xia+1/2} \, \lambda_{\ell+1}$.
\end{itemize}
\end{assumption}

\cite[Property~A]{dai2003alternate} also includes retards in the BB steps, but, as we are not interested in retards in this paper, we will not include them to ease the notation in what follows. The interested reader is referred to \cite{friedlander1998gradient} for the definition of BB steps with retards, and to \cite{dai2003alternate} for the proof of R-linear convergence under this property. Secondly, we note that \cite[Property~A]{dai2003alternate} requires $\xia = 1$, while we allow a looser lower bound for the inverse stepsize. In other words, admissible stepsizes are larger than the largest eigenvalue of $A^{-1}$. Finally, our upper bound in (i) has a more specific shape than the one set in \cite[Property~A]{dai2003alternate}, which is some $M_1 \ge \lambda_1$. This will enable us to express some bounds as a function of the condition number of $A$. Given all the differences between Assumption~\ref{assumption} and \cite[Property~A]{dai2003alternate}, it is worthwhile to analyze the new situation.

We show that targets \eqref{eq:t1}, \eqref{eq:t2}, and \eqref{cot} all lead to a stepsize that satisfies Assumption~\ref{assumption}. In addition, we show a useful bound for the $(\ell + 1)$st gradient component. Analogous proofs can be found in, e.g, \cite[Corollary 4.2]{dai2003alternate} or \cite[Lemma 2]{raydan1993barzilai}. First notice that, with these choices for the target, for $k$ sufficiently large:
\begin{equation}
\label{eq:tbb-bounds}
\xia \cdot \invstep{BB1}{k} \le \invstep{}{k}(\tau_k) \le \xib \cdot \lambda_n
\end{equation}
for certain $0 < \xia \le 1$ and $\xib \ge 1$. Consequently, 
\[
(\xib)^{-1} \cdot \lambda_n^{-1} \le \step{}{k}(\tau_k) \le \xia^{-1} \cdot \step{BB1}{k}.
\]

\begin{lemma}
\label{lemma:propertyA}
Let the inverse stepsize $\alpha_{k}(\tau_k)$ satisfy \eqref{eq:tbb-bounds} with $\frac12 < \xia \le 1$ and $\xib \ge 1$. Then such stepsize satisfies Assumption~\ref{assumption}. In addition, given $k$, there exists a constant $\constray \in (0,1)$ such that
\begin{equation} \label{decreasing-gradient}
\vert\gradcomp{\ell+1}{k+1}\vert \le \constray \ \vert\gradcomp{\ell+1}{k}\vert.
\end{equation}
\end{lemma}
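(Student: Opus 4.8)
The plan is to verify both parts of Assumption~\ref{assumption} and the contraction estimate \eqref{decreasing-gradient} directly from the eigencomponent expression \eqref{tau-lambda-bb1} for $\invstep{BB1}{k}$ together with the sandwich \eqref{eq:tbb-bounds}. Throughout I abbreviate $\gamma_i := \gradcomp{i}{k-1}$, and I read the ``in addition'' clause as holding under the hypotheses of part~(ii), i.e.\ for the specific $k$ and $\ell$ at which the first $\ell$ gradient components have become small; this is the regime in which the recursion \eqref{recursive-coefs} can be controlled.

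For part~(i), the upper bound $\invstep{}{k} \le \xib \, \lambda_n$ is immediate from \eqref{eq:tbb-bounds}. For the lower bound, I note that \eqref{tau-lambda-bb1} exhibits $\invstep{BB1}{k}$ as a weighted average of the eigenvalues $\lambda_i$ with nonnegative weights $\gamma_i^2 / \sum_j \gamma_j^2$ summing to one, so $\invstep{BB1}{k} \ge \lambda_1$; combined with $\invstep{}{k} \ge \xia \, \invstep{BB1}{k}$ this gives $\invstep{}{k} \ge \xia \, \lambda_1$. The heart of the argument is part~(ii). Assuming $\sum_{i=1}^\ell \gamma_i^2 \le \eps$ and $\gamma_{\ell+1}^2 \ge M_2 \, \eps$, I split the numerator and denominator of \eqref{tau-lambda-bb1} at index $\ell$. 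Discarding the nonnegative first $\ell$ terms of the numerator and using $\lambda_i \ge \lambda_{\ell+1}$ for $i \ge \ell+1$ gives $\sum_i \gamma_i^2 \lambda_i \ge \lambda_{\ell+1} \, S$ with $S := \sum_{i \ge \ell+1}\gamma_i^2$; bounding the denominator by $\eps + S$ and using $S \ge \gamma_{\ell+1}^2 \ge M_2 \, \eps$, hence $\eps \le S/M_2$, yields
\[
\invstep{BB1}{k} \ge \frac{\lambda_{\ell+1} \, S}{\eps + S} \ge \frac{M_2}{M_2 + 1}\,\lambda_{\ell+1}.
\]

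It then remains to choose $M_2$ so that $\invstep{}{k} \ge \xia \, \frac{M_2}{M_2+1}\lambda_{\ell+1}$ is at least $\frac{\xia}{\xia+1/2}\,\lambda_{\ell+1}$. A short manipulation reduces this to $M_2\,(\xia - \tfrac12) \ge 1$, so I take $M_2 := (\xia - \tfrac12)^{-1}$, which is finite precisely because $\xia > \tfrac12$. For the contraction bound I invoke the recursion \eqref{recursive-coefs}, $\gradcomp{\ell+1}{k+1} = (1 - \step{}{k}\lambda_{\ell+1})\,\gradcomp{\ell+1}{k}$, so the factor is $\vert 1 - \step{}{k}\lambda_{\ell+1}\vert$. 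Under the hypotheses of~(ii) the bound just established gives $\invstep{}{k} \ge \frac{\xia}{\xia+1/2}\lambda_{\ell+1} > \tfrac12\,\lambda_{\ell+1}$ (again using $\xia > \tfrac12$), hence $0 < \step{}{k}\lambda_{\ell+1} < 2$ and therefore $\vert 1 - \step{}{k}\lambda_{\ell+1}\vert < 1$; any $\constray$ in $[\,\vert 1 - \step{}{k}\lambda_{\ell+1}\vert,\,1)$ then satisfies \eqref{decreasing-gradient}.

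I expect the only delicate point to be the bookkeeping in part~(ii), and in particular recognizing that the threshold $\xia > \tfrac12$ is exactly what simultaneously makes the required $M_2$ finite and forces the contraction factor strictly below one. Everything else is a routine weighted-average estimate followed by an application of the eigencomponent recursion, so I do not anticipate a genuine obstacle beyond choosing the constants consistently.
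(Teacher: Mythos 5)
Your proof is correct and follows essentially the same route as the paper's: the same weighted-average split of \eqref{tau-lambda-bb1} at index $\ell$, the same choice $M_2 = (\xia - \tfrac12)^{-1}$, and the same application of the recursion \eqref{recursive-coefs}. The one difference is the contraction constant: the paper uses both sides of \eqref{eq:tbb-bounds} to obtain the $k$-independent constant $\constray = \max\{(2\xia)^{-1},\ 1 - \lambda_{\ell+1}\,(\xib\,\lambda_n)^{-1}\}$ (which matters downstream, since Theorem~\ref{thm:dai} takes powers $\delta^{\Delta_\ell+1}$ of it), whereas your $\constray \in [\,\vert 1 - \step{}{k}\lambda_{\ell+1}\vert,\,1)$ is per-$k$; your own estimates already give the uniform version, since part (ii) yields $1 - \step{}{k}(\tau_k)\,\lambda_{\ell+1} \ge -(2\xia)^{-1}$ and the upper bound in \eqref{eq:tbb-bounds} yields $1 - \step{}{k}(\tau_k)\,\lambda_{\ell+1} \le 1 - \lambda_{\ell+1}\,(\xib\,\lambda_n)^{-1}$.
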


\begin{proof}
Part (i) of Assumption~\ref{assumption} immediately follows from the bounds on the BB1 step. Given the hypotheses in (ii) of Assumption~\ref{assumption}, Equation~\eqref{tau-lambda-bb1} and $M_2 = (\xia-\frac12)^{-1}$, it follows that
\begin{align*}
\alpha_{k}(\tau_{k})&\ge \xia \ \frac{\sum_{i=1}^n(\gradcomp{i}{k-1})^2\lambda_i}{\sum_{i=1}^n(\gradcomp{i}{k-1})^2}
\ge \xia \, \lambda_{\ell+1}\, \frac{\sum_{i=\ell+1}^n (\gradcomp{i}{k-1})^2} {\sum_{i=\ell+1}^n(\gradcomp{i}{k-1})^2 + \eps} \\
& \ge \xia\, \lambda_{\ell+1}\, \frac{(\xia-\tfrac12)^{-1}\eps}{(\xia-\tfrac12)^{-1}\eps + \eps} = \frac{\xia}{\xia+\frac12}\, \lambda_{\ell+1}.
\end{align*}
Since $\alpha_{k}(\tau_{k}) \le \xib \, \lambda_n$,
\[
1-\frac{\xia + \tfrac12}{\xia} \le 1 - \lambda_{\ell+1}\, \step{}{k}(\tau_k) \le 1 - \frac{\lambda_{\ell+1}}{\xib \, \lambda_n}.
\]
Given \eqref{recursive-coefs}, this implies $\vert\gradcomp{\ell+1}{k+1}\vert\le \constray \, \vert\gradcomp{\ell+1}{k}\vert$ for constant $\constray := \max \{(2\xia)^{-1}, \, 1 - \lambda_{\ell+1} \, (\xib \, \lambda_n)^{-1} \} < 1$.
\end{proof}

\begin{remark}
\label{remark1} {\rm If $\tau_k = \toneconst \, \invstep{BB2}{k}$ (cf.~\eqref{eq:t1}), Lemma~\ref{lemma:propertyA} does not hold for all the values of $\toneconst$: we must restrict ourselves to $\toneconst > 2$. Nevertheless, in the non-quadratic case, the gradient method is endowed with a line search, where bounds on the stepsize are provided by the user (see, e.g., \cite{raydan1997barzilai} and Section~\ref{sec:nonlin}). In this context, we may try also $\toneconst \le 2$, which corresponds to $\xia <\frac12$.
}
\end{remark}

\subsection{Bounds on gradient components}
We establish two bounds on the gradient components. To do so, the bounds in Assumption~\ref{assumption} are used, with an appropriate restriction on $\xia$. We then show that the first gradient component converges to $0$ under certain conditions.
Let us start with the following lemma, which is an extension of \cite[Lemma 1]{raydan1993barzilai}: this holds for $\xia = \xib = 1$ and it is stated for the components of the error, that enjoy the same recursive formula as the components of the gradient.

\begin{lemma} \label{lemma:first-component}
Under Assumption~\ref{assumption}, $\gradcomp{1}k$ converges to zero Q-linearly, i.e., there exists a constant $\constone \in (0,1)$ such that
\begin{equation}
\label{ineq-1}
\vert\gradcomp{1}{k+1}\vert\le \constone \, \vert\gradcomp{1}k\vert.
\end{equation}
\end{lemma}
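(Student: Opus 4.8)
The plan is to reduce the statement to a uniform contraction estimate on the scalar factor that drives $\gradcomp{1}{k}$. The recursion \eqref{recursive-coefs} specializes at $i = 1$ to $\gradcomp{1}{k+1} = (1 - \step{}{k}\lambda_1)\,\gradcomp{1}{k}$, so it suffices to produce a constant $\constone \in (0,1)$, independent of $k$, with $\vert 1 - \step{}{k}\lambda_1\vert \le \constone$ for all $k$. A useful observation to flag up front is that only part~(i) of Assumption~\ref{assumption} is needed here: because $\lambda_1$ is the smallest eigenvalue, the coarse two-sided bound on the inverse stepsize already controls the multiplier, whereas the finer condition~(ii) is reserved for the higher gradient components treated in the subsequent results.

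First I would translate the bound $\xia\,\lambda_1 \le \invstep{}{k} \le \xib\,\lambda_n$ into the corresponding bound on the stepsize $\step{}{k} = \invstep{}{k}^{-1}$, namely $(\xib\,\lambda_n)^{-1} \le \step{}{k} \le (\xia\,\lambda_1)^{-1}$, which is legitimate since all quantities are positive. Multiplying through by $\lambda_1 > 0$ then gives
\[
\frac{\lambda_1}{\xib\,\lambda_n} \le \step{}{k}\,\lambda_1 \le \frac{1}{\xia},
\]
and subtracting from $1$ yields $1 - \tfrac1{\xia} \le 1 - \step{}{k}\,\lambda_1 \le 1 - \tfrac{\lambda_1}{\xib\,\lambda_n}$.

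The key step is to verify that this interval sits strictly inside $(-1,1)$, so that $\constone := \max\{\tfrac1{\xia} - 1,\ 1 - \tfrac{\lambda_1}{\xib\,\lambda_n}\}$ is a genuine contraction factor. The upper endpoint obeys $1 - \tfrac{\lambda_1}{\xib\,\lambda_n} < 1$ trivially. The lower endpoint is where the hypothesis $\xia > \tfrac12$ is indispensable: it forces $\tfrac1{\xia} < 2$, hence $1 - \tfrac1{\xia} > -1$. This is precisely the place where the assumption $\xia \in (\tfrac12, 1]$ (rather than merely $\xia \le 1$) enters, and I would highlight it, since it motivates the otherwise curious lower threshold in Assumption~\ref{assumption}. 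I do not anticipate a real obstacle; the argument is a short chain of inequalities, and the only care required is in tracking signs when inverting the stepsize bound and when passing to absolute values. The resulting $\constone$ is manifestly $k$-independent, which is exactly what delivers the Q-linear (as opposed to merely R-linear) decay of $\gradcomp{1}{k}$ asserted in \eqref{ineq-1}.
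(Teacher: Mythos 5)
Your proof is correct and follows essentially the same route as the paper: both use only part (i) of Assumption~\ref{assumption} to bound $1-\lambda_1\,\step{}{k}$ between $1-\xia^{-1}$ and $1-\lambda_1/(\xib\,\lambda_n)$, and then set $\constone = \max\{\xia^{-1}-1,\ 1-\lambda_1/(\xib\,\lambda_n)\}$, which is exactly the paper's constant written with $\kappa(A)=\lambda_n/\lambda_1$ expanded. Your explicit remarks---that $\xia>\tfrac12$ is precisely what keeps the lower endpoint above $-1$, and that condition (ii) is not needed for the first component---are merely spelled-out versions of what the paper leaves implicit.
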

\begin{proof}
From part (i) in Assumption~\ref{assumption}, we have
\[
1-\xia^{-1} \le 1 - \lambda_1\,\step{}{k}(\tau_k) \le 1 - (\kappa(A) \, \xib)^{-1},
\]
with $\kappa(A)$ the condition number as in \eqref{condnr}. Thus, when 
applying these bounds to \eqref{recursive-coefs}, we see that \eqref{ineq-1} holds with $\constone = \max\{\xia^{-1}-1, \ 1 - (\kappa(A) \, \xib)^{-1}\}$. The conditions on $\xia$ and $\xib$ guarantee that $\constone$ is indeed in the interval $(0,1)$.
\end{proof}

Unfortunately, it is not possible to prove the $Q$-linear convergence of the other gradient components, but the following inequality will play a role later. This result generalizes \cite[Lemma~2.1]{dai2002r}.
\begin{lemma}
\label{lemma:other-components}
Under Assumption~\ref{assumption}, there exists a constant $\consttwo > 0$ such that for $i = 2, \dots, n$
\begin{equation} \label{ith-error-comp}
\vert\gradcomp{i}{k+1}\vert\le \consttwo \, \vert\gradcomp{i}k\vert.
\end{equation}
\end{lemma}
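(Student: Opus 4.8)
The plan is to bound each factor $(1 - \step{}{k}(\tau_k)\,\lambda_i)$ appearing in the recursion \eqref{recursive-coefs} by a constant independent of $k$, and thereby extract the uniform contraction constant $\consttwo$. Unlike the first component (Lemma~\ref{lemma:first-component}), where both sides of the inequality $1 - \xia^{-1} \le 1 - \lambda_1\,\step{}{k}(\tau_k) \le 1 - (\kappa(A)\,\xib)^{-1}$ are genuinely in $(-1,1)$, for a general index $i$ the quantity $1 - \step{}{k}(\tau_k)\,\lambda_i$ need not have absolute value less than one; in fact $\step{}{k}(\tau_k)$ can be as large as $\xia^{-1}\,\step{BB1}{k}$, so for large $\lambda_i$ the factor may be well below $-1$. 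Hence I do not expect Q-linear decay, only the cruder bound \eqref{ith-error-comp} with some $\consttwo > 0$ possibly exceeding $1$.

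First I would invoke part (i) of Assumption~\ref{assumption}, namely $\xia \cdot \lambda_1 \le \invstep{}{k}(\tau_k) \le \xib \cdot \lambda_n$, which upon inversion gives $(\xib\,\lambda_n)^{-1} \le \step{}{k}(\tau_k) \le (\xia\,\lambda_1)^{-1}$. Multiplying through by a fixed eigenvalue $\lambda_i$ and subtracting from $1$ yields
\[
1 - \frac{\lambda_i}{\xia\,\lambda_1} \ \le \ 1 - \step{}{k}(\tau_k)\,\lambda_i \ \le \ 1 - \frac{\lambda_i}{\xib\,\lambda_n}.
\]
The upper bound lies in $[0,1)$ since $\lambda_i \le \lambda_n$ and $\xib \ge 1$, so it contributes a factor bounded by $1$. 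The lower bound can be negative with large magnitude; its absolute value is at most $\frac{\lambda_i}{\xia\,\lambda_1} - 1 \le \frac{\lambda_n}{\xia\,\lambda_1} - 1 = \xia^{-1}\kappa(A) - 1$. Taking absolute values in \eqref{recursive-coefs} then gives \eqref{ith-error-comp} with the explicit choice
\[
\consttwo := \max\bigl\{\, \xia^{-1}\,\kappa(A) - 1,\ 1 - (\xib\,\lambda_n)^{-1}\lambda_2 \,\bigr\},
\]
which is a positive constant depending only on the spectrum of $A$ and the constants $\xia,\xib$ from Assumption~\ref{assumption}, and crucially is independent of $k$ and of $i$ (one may use the uniform worst case $\lambda_i = \lambda_n$ in the first term and $\lambda_i = \lambda_2$ in the second).

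The main obstacle, and the reason this lemma is weaker than Lemma~\ref{lemma:first-component}, is precisely that one cannot force $\consttwo < 1$: the stepsize tuned to reduce the small-eigenvalue components overshoots on the large-eigenvalue components, so the factor $1 - \step{}{k}(\tau_k)\,\lambda_i$ may amplify rather than contract $\gradcomp{i}{k}$. The point of the lemma is therefore only to certify that the components $\gradcomp{i}{k}$ cannot grow faster than geometrically with ratio $\consttwo$; this controlled-growth estimate is what later gets combined with the genuine decay of the earlier components (and an induction on $\ell$ using \eqref{decreasing-gradient} from Lemma~\ref{lemma:propertyA}) to establish R-linear convergence of the full gradient. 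I would keep the proof short, presenting the two-sided bound on $1 - \step{}{k}(\tau_k)\,\lambda_i$ and reading off $\consttwo$, rather than attempting any sharper per-index analysis.
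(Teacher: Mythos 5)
Your proof is correct and follows essentially the same route as the paper: both invoke part (i) of Assumption~\ref{assumption} to sandwich $1-\step{}{k}(\tau_k)\,\lambda_i$ between $1-\xia^{-1}\,\kappa(A)$ and an upper bound below one, and then read off $\consttwo$ from \eqref{recursive-coefs} as the maximum of the two absolute values. The only difference is cosmetic: you take the worst case $\lambda_i=\lambda_2$ in the upper term, giving $1-(\xib\,\lambda_n)^{-1}\lambda_2$, where the paper uses the marginally looser $1-(\kappa(A)\,\xib)^{-1}$ (i.e., $\lambda_i=\lambda_1$), so your constant is very slightly sharper but the argument is identical.
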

\begin{proof}
From part (i) in Assumption~\ref{assumption}, we have
\[
1-\xia^{-1} \, \kappa(A) \le 1 - \lambda_i\,\step{}{k}(\tau_k) \le 1 - (\kappa(A) \, \xib)^{-1}.
\]
Application of these bounds to \eqref{recursive-coefs} implies that \eqref{ith-error-comp} holds with positive constant $\consttwo := \max \{\xia^{-1} \, \kappa(A)-1, \ 1-(\kappa(A) \, \xib)^{-1}\}$.
\end{proof}

We note that usually $\consttwo$ will be $\xia^{-1} \, \kappa(A)-1$, and that this quantity may be very large. However, this still provides us with a needed tool for proving the convergence of the gradient method for quadratics. In addition, we remark that Lemma~\ref{lemma:other-components} still holds in general for $\xia \in (0,1]$ and $\xib \ge 1$.

\subsection{Proof of R-linear convergence}
Finally, we are able to state the R-linear convergence result under Assumption~\ref{assumption}, which closely follows the line of the proof of \cite[Thm.~4.1]{dai2003alternate}. The key parts of the proof are the bounds on the gradient components from Lemma~\ref{lemma:first-component} and Lemma~\ref{lemma:other-components}, the result on the $(\ell+1)$st gradient component of Lemma~\ref{lemma:propertyA}. Our contribution is to adapt all these results that were already in \cite{dai2003alternate} but derived from \cite[Property~A]{dai2003alternate}. A slightly less general proof of R-linear convergence was previously presented in \cite{dai2002r}. 

\begin{theorem}
\label{thm:dai}
Let $f$ be a strictly convex quadratic function and let $\bx^\ast = A^{-1}\bb$ be its unique minimizer. Let $\{\bx_k\}$ be the sequence generated by the gradient method where the stepsize satisfies Assumption~\ref{assumption}. Then, either $\bg_k = \zero$ for some finite $k$, or the sequence $\{\|\bg_k\|\}$ converges to zero R-linearly. 
\end{theorem}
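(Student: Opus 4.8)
The plan is to follow the structure of \cite[Thm.~4.1]{dai2003alternate} and \cite[Thm.~2.5]{dai2002r}, but now assembled from the three lemmas just established under Assumption~\ref{assumption}. The overall strategy is an induction on the eigen-index: I will show that each gradient component $\gradcomp{i}{k}$ tends to zero R-linearly by inducting on $i = 1, \dots, n$, exploiting the fact that the components are ordered by the corresponding eigenvalues $\lambda_1 \le \cdots \le \lambda_n$. Since $\|\bg_k\|^2 = \sum_i (\gradcomp{i}{k})^2$ by orthonormality of the eigenbasis, R-linear convergence of every component yields R-linear convergence of $\|\bg_k\|$, which is exactly the claim (the alternative being the degenerate case $\bg_k = \zero$ in finitely many steps, which terminates the argument trivially).

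First I would dispose of the base case: Lemma~\ref{lemma:first-component} already gives Q-linear, hence R-linear, convergence of $\gradcomp{1}{k}$. For the inductive step I assume that $\gradcomp{1}{k}, \dots, \gradcomp{\ell}{k}$ all converge to zero R-linearly and argue that $\gradcomp{\ell+1}{k}$ does too. The engine here is the dichotomy built into Assumption~\ref{assumption}(ii): informally, at any iteration $k$, either the first $\ell$ components are already so small that $\sum_{i=1}^\ell (\gradcomp{i}{k-1})^2 \le \eps$ while $(\gradcomp{\ell+1}{k-1})^2 \ge M_2\,\eps$, in which case part (ii) forces $\alpha_k \ge \frac{\xia}{\xia+1/2}\,\lambda_{\ell+1}$ and Lemma~\ref{lemma:propertyA} supplies the genuine contraction factor $\constray < 1$ on $\gradcomp{\ell+1}{k}$ via \eqref{decreasing-gradient}; or else $(\gradcomp{\ell+1}{k-1})^2$ is itself controlled by the (small, by induction) partial sum $\sum_{i=1}^\ell (\gradcomp{i}{k-1})^2$. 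The uniform single-step bound $\vert \gradcomp{\ell+1}{k+1} \vert \le \consttwo\,\vert \gradcomp{\ell+1}{k}\vert$ from Lemma~\ref{lemma:other-components}, even though $\consttwo$ may exceed $1$, then prevents $\gradcomp{\ell+1}{k}$ from growing too fast between the ``good'' (contracting) iterations. The standard bookkeeping argument of Dai shows that good iterations recur often enough that the product of per-step factors decays geometrically, establishing R-linear decay of $\gradcomp{\ell+1}{k}$.

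The hard part will be the careful quantitative interleaving of the two regimes. Because $\consttwo$ can be as large as $\xia^{-1}\kappa(A) - 1$, a naive count is not enough: I must show that the number of consecutive ``bad'' iterations (where $\gradcomp{\ell+1}{k}$ may temporarily expand by a factor up to $\consttwo$) is bounded, or at least that the cumulative expansion they contribute is dominated by the contraction $\constray$ harvested on the good iterations together with the inductively-controlled smallness of the lower components. This is precisely where the constant $M_2 = (\xia - \tfrac12)^{-1}$ and the requirement $\xia \in (\tfrac12, 1]$ enter: they calibrate the threshold $\eps$ against which ``small enough'' is measured, and guarantee the step is pushed above $\tfrac{\xia}{\xia+1/2}\lambda_{\ell+1}$ whenever $\gradcomp{\ell+1}{k-1}$ dominates. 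I would make this precise by choosing, for each target accuracy, a scale $\eps$ tied to the already-established geometric decay rate of $\sum_{i=1}^{\ell}(\gradcomp{i}{k})^2$, and then tracking $\max_{1 \le i \le \ell+1} \vert \gradcomp{i}{k}\vert$ through a finite window of iterations.

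Finally, I would assemble the pieces: having shown $\gradcomp{i}{k} \to 0$ R-linearly for all $i$, I combine the component-wise geometric rates (taking the worst among the finitely many rates and a common constant) to conclude $\|\bg_k\|^2 = \sum_i (\gradcomp{i}{k})^2 \to 0$ R-linearly, completing the proof. Throughout, I would lean on \cite[Thm.~4.1]{dai2003alternate} to borrow the combinatorial structure of the induction verbatim, since our only departures from \cite[Property~A]{dai2003alternate} are the relaxed lower bound $\xia \in (\tfrac12, 1]$ and the sharpened upper bound $\xib\,\lambda_n$, both of which have already been accommodated inside Lemmas~\ref{lemma:first-component}, \ref{lemma:other-components}, and \ref{lemma:propertyA}; thus the remaining work is to verify that Dai's argument goes through unchanged with these specific constants $\constray$, $\constone$, and $\consttwo$ substituted in.
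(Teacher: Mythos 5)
Your outline is correct and takes essentially the same route as the paper: both run the combinatorial induction of \cite[Thm.~4.1]{dai2003alternate} on top of the three lemmas, with Lemma~\ref{lemma:first-component} as the base case, the dichotomy in Assumption~\ref{assumption}(ii) supplying the genuine contraction \eqref{decreasing-gradient} from Lemma~\ref{lemma:propertyA}, and Lemma~\ref{lemma:other-components} capping the expansion on the ``bad'' iterations. One substantive difference in framing deserves attention, though. You state the induction hypothesis as ``each component $\gradcomp{i}{k}$ converges to zero R-linearly,'' but Dai's bookkeeping --- which the paper reproduces in its Parts I--III --- does not prove per-component rates. It proves \emph{relative window} bounds: $G(k+j,\ell) \le \eps_\ell \, \|\bg_k\|^2$ for all $j \ge m_\ell$, where $G(k,\ell) = \sum_{i=1}^{\ell}(\gradcomp{i}{k})^2$ and, crucially, the delay $m_\ell$ is independent of the anchor $k$; the induction terminates with $\|\bg_{k+m_n}\|^2 \le \tfrac14\,\|\bg_k\|^2$ for every $k$, and it is this uniform-window decay that yields R-linear convergence of the norm (individual components then inherit it, not the other way around). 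So the combinatorial structure cannot be borrowed ``verbatim'' under your stated hypothesis; you would have to re-anchor it. That said, your per-component version can in fact be made rigorous, and rather directly: given $G(k,\ell) \le C\theta^{k}$ by induction, apply Assumption~\ref{assumption}(ii) at each iteration with $\eps = G(k-1,\ell)$; either $(\gradcomp{\ell+1}{k-1})^2 \ge M_2\,\eps$, in which case \eqref{decreasing-gradient} contracts the component by $\constray^2 < 1$, or $(\gradcomp{\ell+1}{k-1})^2 < M_2\, C\,\theta^{k-1}$, and a single application of Lemma~\ref{lemma:other-components} keeps $(\gradcomp{\ell+1}{k})^2 \le \consttwo^{\,2} M_2\, C\, \theta^{k-1}$; a one-line induction then gives $(\gradcomp{\ell+1}{k})^2 \le K \max\{\constray^2, \theta\}^{k}$, which would make your route a legitimately shorter alternative to the paper's pair-of-indices ($j_1, j_2$) argument. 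As written, however, the crux of your proposal is delegated to a citation whose induction hypothesis does not quite match the one you set up, so you must either switch to the paper's relative formulation or carry out the re-anchored per-component argument explicitly.
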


\begin{proof} 
Let $G(k,\ell) := \sum_{i=1}^{\ell}(\gradcomp ik)^2$, $\delta_1 = \constone^{\,2}$, $\delta_2 = \consttwo^{\,2}$ and $\delta = \constray^2$. In particular, notice that $G(k,n) = \|\bg_k\|^2$. Assume also that $\consttwo > 1$, otherwise we would immediately conclude the proof due to Lemma~\ref{lemma:other-components}.

{\bf Part I.} First we prove that, for an integer $1 \le \ell \le n-1$ and given $k\ge 1$, if there exists some $\eps_\ell \in (0,M_2^{-1})$ and integer $m_\ell$ such that 
\begin{equation} \label{induction-hp}
G(k+j, \ell) \le \eps_\ell \, \|\bg_k\|^2\quad\text{for}\quad j\ge m_\ell
\end{equation}
then there exists $j_0 \in \{m_\ell, \dots, m_\ell + \Delta_\ell + 1\}$, with $\Delta_\ell = \Delta_\ell(M_2, \eps_\ell, \delta_2, \delta, m_\ell)$, such that
\[
(\gradcomp{\ell+1}{k+j_0})^2\le M_2\, \eps_\ell \, \|\bg_k\|^2.
\] 
Assume that $(\gradcomp{\ell+1}{k+j})^2 > M_2\, \eps_\ell \, \|\bg_k\|^2$ for all $j \in \{m_\ell, \dots, m_\ell+\Delta_\ell\}$. We show that the thesis holds for $j_0 = m_\ell+\Delta_\ell + 1$. First, apply Lemma~\ref{lemma:propertyA} $\Delta_\ell+1$ times, and Lemma~\ref{lemma:other-components} $m_\ell$ times to obtain
\[
(\gradcomp{\ell+1}{k+m_\ell+\Delta_\ell + 1})^2 \, \le \, \delta^{\Delta_\ell+1}\, (\gradcomp{\ell+1}{k+m_\ell})^2 \, \le \, \delta^{\Delta_\ell+1}\, \delta_2^{m_\ell}\, (\gradcomp{\ell+1}k)^2.
\]
Then choose $\Delta_\ell$ as the smallest integer that solves $\delta^{\Delta_\ell+1}\delta_2^{m_\ell} \le M_2\, \eps_\ell$ (such $\Delta_\ell$ exists due to the choice of $\eps_\ell$ and the fact that $\delta < 1 $) and complete the first proof:
\[
(\gradcomp{\ell+1}{k+m_\ell+\Delta_\ell + 1})^2 \le M_2\, \eps_\ell\, \|\bg_k\|^2.
\]
{\bf Part II.} Let $m_{\ell+1} = m_\ell + \Delta_\ell + 1$ and $\eps_{\ell+1} = (1+M_2\, \delta_2^2)\, \eps_\ell$. If \eqref{induction-hp} holds, we show that
\[
G(k+j, \ell+1) \le \eps_{\ell+1}\|\bg_k\|^2\quad\text{for}\quad j\ge m_{\ell+1}.
\]
Since $G(k+j, \ell+1) = G(k+j, \ell) + (\gradcomp{\ell+1}{k+j})^2$ and $m_{\ell+1} > m_\ell$, it is sufficient to prove that
\[
(\gradcomp{\ell+1}{k+j})^2 \le M_2\, \delta_2^2\, \eps_\ell\, \|\bg_k\|^2 \quad \text{for} \quad j\ge m_{\ell+1}.
\]
From the first result, there are infinitely many pairs of indices $j_1,j_2$ with $j_2 \ge j_1 + 2 > j_1 \ge j_0 \ge m_\ell$ such that
\begin{align*}
(\gradcomp{\ell+1}{k+j})^2 &\le M_2\, \eps_\ell \, \|\bg_k\|^2\quad \text{for}\quad j=j_1,j_2, \\
(\gradcomp{\ell+1}{k+j})^2 &> M_2\, \eps_\ell \, \|\bg_k\|^2\quad \text{for}\quad j\in \{j_1+1, \dots, j_2-1\}.
\end{align*}
From Lemma~\ref{lemma:propertyA} it holds that $(\gradcomp{\ell+1}{k+j})^2 \le \delta \, (\gradcomp{\ell+1}{k+j-1})^2 < (\gradcomp{\ell+1}{k+j-1})^2$ for $j \in \{j_1+3, \dots, j_2 + 1\}$, since $\delta < 1$. This results in a chain of inequalities, halting at $j = j_1 + 2$, which corresponds to the rightmost term; to get any further, we apply Lemma~\ref{lemma:other-components}:
\[
(\gradcomp{\ell+1}{k+j})^2 \, \le \, (\gradcomp{\ell+1}{k+j_1+3})^2 \, \le \, (\gradcomp{\ell+1}{k+j_1+2})^2 \, \le \, \delta_2^2\, (\gradcomp{\ell+1}{k+j_1})^2.
\]
Note that the last inequality also holds when $j = j_1,\,j_1+1$ (in place of $j = j_1 + 2$), since we assumed $\delta_2 > 1$. Thus our conclusion is
\[
(\gradcomp{\ell+1}{k+j})^2 \le M_2 \, \delta_2^2 \, \eps_\ell \, \|\bg_k\|^2, \qquad j_1 \le j \le j_2+1.
\]
Since $j_1$ and $j_2$ are chosen arbitrarily and $j_0 \le m_{\ell+1}$, the result automatically holds for any $j \ge m_{\ell+1}$.

{\bf Part III.} Finally, we prove by induction that \eqref{induction-hp} holds for all $1 \le \ell \le n$ with
\[
\eps_\ell = \tfrac14 \, (1+M_2\, \delta_2^2)^{\ell-n}.
\]
For $\ell = 1$ and from Lemma~\ref{lemma:first-component}, the first component of the gradient satisfies $G(k+j, 1) \le \delta_1^j \, \|\bg_k\|^2$. As in the first step, we ask $\delta_1^j \le \eps_1$ and get $j \ge m_1$, with $ m_1 = \lceil \frac{\log \eps_1}{\log \delta_1}\rceil$. Once the thesis is true for some $1 \le \ell \le n-1$, the second step shows that it also holds for $\ell + 1$, with $m_{\ell + 1} = m_\ell + \Delta_\ell + 1$ and $\eps_{\ell+1} = \frac14\, (1 + M_2\, \delta_2^2)\, (1+M_2\, \delta_2^2)^{\ell-n}$. We can conclude that the thesis holds for $\ell = n$, and thus
\[
\|\bg_{k+m_n}\|^2 \le \tfrac{1}{4} \, \|\bg_k\|^2,
\]
where $m_n$ does not depend on $k$. Renumbering the indices as in \cite{dai2002r} enables us to conclude that the $\bg_k$ converge to zero R-linearly.
\end{proof}

We remark that, when the objective function is quadratic, Theorem~\ref{thm:dai} shows that no line search is required to guarantee the convergence of the gradient method with TBB stepsizes.
For generic unconstrained optimization problems, we add a line search procedure with a condition of \emph{sufficient decrease} in the next section.

\section{Generic nonlinear functions} \label{sec:nonlin}
We now turn our attention to generic (non-quadratic) continuous differentiable functions $f$.
As the expression \eqref{hss} is not suitable since the Hessian is usually not available, we use the generalization \eqref{hssg}. Just as for the quadratic case (see Proposition~\ref{prop:beta}), the well-known BB1 and BB2 stepsizes are retrieved for $\tau_k \to \pm \infty$ and $\tau_k = 0$, respectively. 
In the quadratic case, the secant equation $\by_{k-1} = B_k \bs_{k-1}$ holds for $B_k \equiv A$, and this allows for the interpretation of BB steps and the TBB step as Rayleigh quotients of $A$.
When $f$ is a generic function, the average Hessian $B_k = \int_0^1\nabla^2f(\bx_{k-1} + t \, \bs_{k-1})\,dt$ satisfies the secant equation (cf., e.g., \cite[Eq.~(6.11)]{nocedal2006numerical}), and thus we can still think at BB steps and the TBB step as Rayleigh quotients, which approximate the eigenvalues of this average $B_k$ instead of the ones of $\nabla^2f(\bx_k)$.
We note that, under the condition that $B_k$ is SPD, all results of Sections~\ref{sec:frame} and \ref{sec:abb} continue to hold for generic functions when replacing $A$ by $B_k$.

Algorithm~\ref{algo:nonlin} shows a pseudocode for TBB-step methods for general nonlinear unconstrained optimization problems.
As usual (cf., e.g., \cite[Alg.~1]{daniela2018steplength}), unlike the quadratic case of Algorithm~\ref{algo:tbb}, safeguards are added for the steplength.
We also include the nonmonotone line search strategy from \cite{grippo1986nonmonotone} (see Line~2). Line~3 features a well-known condition of sufficient decrease, where we take the common values of the line search parameters $c_{\rm{ls}} = 10^{-4}$, $\sigma_{\rm{ls}} = \frac12$ (cf.~\cite[p.~33]{nocedal2006numerical}), and $M = 10$ in the experiments. One of the key ingredients to prove the global convergence of Algorithm~\ref{algo:nonlin} is the existence of uniform bounds on the stepsize, i.e., $\step{}{k}\in [\beta_{\min}, \beta_{\max}]$ for all $k$. Since the TBB stepsize \eqref{hssg} with safeguard lies in this interval, the convergence of the algorithm is guaranteed by \cite[Thm.~2.1]{raydan1997barzilai}. It is possible to show R-linear convergence of the algorithm for uniformly convex functions (cf.~\cite[Thm.~3.1, Eq.~(31)]{dai2002nonmonotone}). We remark that the initial starting steplength in the nonmonotone line search is different from the algorithm in \cite{raydan1997barzilai}, and this might lead to a smaller number of backtracking steps. 

The last two features that may significantly affect the speed of the algorithm are the initial stepsize, and the treatment of uphill directions, or, equivalently, negative steplengths. Popular choices for the initial stepsize are $\step{}{0} = 1$ (cf., e.g., \cite{raydan1997barzilai, daniela2018steplength}) or $\step{}{0} = \|\bg_0\|^{-1}$ (cf., e.g., \cite{dai2003alternate}), where the norm is the Euclidean norm or the $\infty$-norm. Line~8 deals with possible uphill directions: \gf{when $\bs_{k-1}^T\by_{k-1} < 0$, $B_k$ is not SPD, and thus all the properties studied in Sections~\ref{sec:frame}--\ref{sec:conv} do not necessarily hold. The TBB step may still be positive for some target values, but does not have a clear connection to the eigenvalues of $B_k$. In fact, when $\bs_{k-1}^T\by_{k-1} < 0$, the TBB steps (including the BB steps) render a negative approximation of the inverse eigenvalues of $B_k$.} Therefore, the tentative $\beta_{k}$ is replaced by a certain $\altstep_{k} > 0$. A possible choice is $\altstep_{k} \equiv \beta_{\max}$ (see, e.g., \cite{daniela2018steplength}), but this stepsize may be huge and might cause overflow problems. Raydan \cite{raydan1997barzilai} proposes to set $\altstep_{k} = \max(\min(\|\bg_k\|_2^{-1}, \, 10^{5}), \ 1)$, which is an attempt to move away from the uphill direction, while keeping $\|\altstep_k\, \bg_k\|$ moderate. 
Others (e.g., \cite{dai2006cyclic, huang2022acceleration}) simply use $\altstep_k = \|\bg_k\|^{-1}$, as it is done for the first stepsize. There is also an interesting alternative of \cite{park2020variable} that reuses the previous steplength $\altstep_k = \step{}{k-1}$; this strategy resembles the cyclic gradient method, where the same BB stepsize is reused for several iterations \cite{dai2006cyclic}. 

\begin{algorithm}
\caption{A TBB method for general nonlinear functions}
\label{algo:nonlin}
{\bf Input}: continuous differentiable function $f$, initial guess $\bx_0$, initial stepsize $\beta_0 > 0$, tolerance {\sf tol}; safeguarding parameters $\beta_{\max} > \beta_{\min} > 0$; line search parameters $c_{\rm{ls}}$, $\sigma_{\rm{ls}} \in (0,1)$; memory integer $M>0$; replacement strategy for negative stepsizes $\altstep_k > 0$ \\
{\bf Output}: approximation to minimizer $\argmin_{\bx} f(\bx)$ \\
\begin{tabular}{rl}
{\footnotesize 1}: & Set $\bg_0 = \nabla f(\bx_0)$ \\
& {\bf for} $k = 0, 1, \dots$ \\
{\footnotesize 2}: & \phantom{M} $\nu_k = \beta_k$, \ $f_{\text{ref}} = \max \, \{ \, f(\bx_{k-j}) \, : \, 0 \le j \le \min(k,M-1) \, \}$ \\
{\footnotesize 3}: & \phantom{M} {\bf while} \ $f(\bx_k-\nu_k \, \bg_k) > f_{\text{ref}} - c_{\rm{ls}} \, \nu_k \, \|\bg_k\|^2$ \ {\bf do} \ $\nu_k = \sigma_{\rm{ls}} \, \nu_k$ \ {\bf end} \\
{\footnotesize 4}: & \phantom{M} Set \ $\bs_k = -\nu_k \, \bg_k$ \ and update \ $\bx_{k+1} = \bx_k + \bs_k$ \\
{\footnotesize 5}: & \phantom{M} Compute the gradient \ $\bg_{k+1} = \nabla f(\bx_{k+1})$ \\
{\footnotesize 6}: & \phantom{M} {\bf if} \ $\|\bg_{k+1}\| \le {\sf tol} \cdot \|\bg_0\|$, \ {\bf return}, \ {\bf end} \\
{\footnotesize 7}: & \phantom{M} $\by_k = \bg_{k+1} - \bg_k$ \\
{\footnotesize 8}: & \phantom{M} \gf{{\bf if} \ $\bs_k^T\by_k < 0$, set $\step{}{k+1} = \altstep_{k+1}$}\\
{\footnotesize 9}: & \phantom{M} \gf{{\bf else}} \ choose $\tau_{k+1}$, compute the TBB step $\beta_{k+1}(\tau_{k+1})$ \eqref{hssg} \ \gf{\bf end}\\
{\footnotesize 10}: & \phantom{M} Set $\beta_{k+1} = \min(\max(\beta_{k+1}, \, \beta_{\min}), \ \beta_{\max})$\\
\end{tabular}
\end{algorithm}

Algorithms~\ref{algo:tbb} and \ref{algo:nonlin} can be combined with preconditioning or scaling.
Scaling may be viewed as the simplest case of preconditioning, that is, by a diagonal SPD matrix. Scaling is a powerful and efficient technique;
we refer to \cite{luengo2002preconditioned} for scaling techniques for unconstrained optimization problems.
The combination of preconditioning and BB steps for quadratic problems has been discussed in \cite{molina1996preconditioned}.
The use of scaling or more general preconditioning is outside the scope of this paper.

\section{Numerical experiments} \label{sec:exp}
We test various target strategies on strictly convex quadratics (Algorithm~\ref{algo:tbb}) and generic differentiable functions (Algorithm~\ref{algo:nonlin}).
The purpose of these experiments is to show numerically that the introduction of an adaptive target in the stepsizes \eqref{hss} can sometimes lead to better convergence results, in terms of number of iterations and function evaluations. For non-convex problems, we also observe that different targets can sometimes detect different local optima. 

\subsection{Sweeping the spectrum of the Hessian matrix} 
Given the recursive definition of the gradient \eqref{recursive-err} for quadratic problems, one can see that if a stepsize sweeps the spectrum of the Hessian matrix appropriately, then the convergence of the corresponding gradient method is faster. Before moving to a detailed analysis of the performances of different stepsizes, we illustrate the sweeping capability of each steplength on the three quadratic problems proposed by \cite{daniela2018steplength}, QP1, QP2 and QP3, in the same setting as \cite{daniela2018steplength}. All three problems have a diagonal Hessian with eigenvalues $0 < \lambda_1 \le \dots \le \lambda_n$. The eigenvalues of QP1 follow the asymptotic distribution of the eigenvalues of a class of covariance matrices; those of QP2 are such that the ratio between two consecutive eigenvalues is constant. In the last problem, eigenvalues are clustered in two groups; see \cite{daniela2018steplength} for further details. Table~\ref{tab:experiments} reports all the implemented target strategies, divided into three groups: known schemes from the literature, positive targets, and (negative) targets inspired by the cotangent function. The stepsizes studied throughout this section are summarized in Table~\ref{tab:experiments}.
\begin{table}[htb!]
\centering
\caption{Strategies for the stepsize.}
{\footnotesize \begin{tabular}{lll}
\toprule
Method & Target $\tau_k$ & Reference \\
\midrule
BB1 & $\pm\infty$ & Cf.~\cite{bb1988}\\
BB2 & $0$ & Cf.~\cite{bb1988}\\
ABB & (NA) & Cf.~\cite{zhou2006abb}, $\eta = 0.8$\\
$\text{ABB}_{\text{min}}$ & (NA) & Cf.~\cite{frassoldati2008new}, $\eta = 0.8$, $m = 4$\\
$\text{ABB}_{\text{bon}}$ & (NA) & Cf.~\cite{bonettini2009scaled}, $\eta_0 = 0.5$, $m = 4$\\
[1.5mm]
IBB2 2.01 & $2.01\,\invstep{BB2}{k}$& Eq.~\eqref{eq:t1}\\[0.5mm]
IBB2 100 & $100\,\invstep{BB2}{k}$ & Eq.~\eqref{eq:t1}\\[0.5mm]
ITER & $k\,\invstep{BB2}{k}$& Eq.~\eqref{eq:t2}\\[1.5mm]
COT~11 & $-\cot(\bs_{k-1}, \by_{k-1})$ & Eq.~\eqref{cot} \\
COT~H1 & $-\cos^{1/2}(\bs_{k-1},\by_{k-1})\,/\,\sin(\bs_{k-1}, \by_{k-1})$ & Eq.~\eqref{cot}\\
COT~1H & $-\cos(\bs_{k-1},\by_{k-1})\,/\,\sin^{1/2}(\bs_{k-1}, \by_{k-1})$ & Eq.~\eqref{cot}\\
COT~21 & $-\cos^2(\bs_{k-1},\by_{k-1})\,/\,\sin(\bs_{k-1}, \by_{k-1})$ & Eq.~\eqref{cot}\\
COT~12 & $-\cos(\bs_{k-1},\by_{k-1})\,/\,\sin^2(\bs_{k-1}, \by_{k-1})$ & Eq.~\eqref{cot}\\
\bottomrule
\end{tabular}}
\label{tab:experiments}
\end{table}

We also consider two effective variants of ABB, which we indicate with $\text{ABB}_{\text{min}}$ \cite{frassoldati2008new} and $\text{ABB}_{\text{bon}}$ \cite{bonettini2009scaled}. 
In the first one, we take the smallest BB2 stepsize over the last $m+1$ iterations, when $\cos^2(\bs_{k-1},\by_{k-1})$ is small:
\[
\step{ABB_{\rm min}}k = \left\{
\begin{array}{ll}
\min\{\step{BB2}j\mid j = \max\{1,k-m\},\dots,k\}, & \ \text{if} \ \step{BB2}k < \eta \, \step{BB1}k, \\[1.5mm]
\step{BB1}k, & \ \text{otherwise.}
\end{array}
\right.
\]
$\text{ABB}_{\text{bon}}$ is defined in the same way as $\text{ABB}_{\text{min}}$ but with an adaptive threshold $\eta$: starting from $\eta_0 = 0.5$, this is updated as
\[
\eta_{k+1} = \left\{
\begin{array}{ll}
0.9\,\eta_k, & \ \text{if} \ \step{BB2}k < \eta_k \, \step{BB1}k, \\[0.5mm]
1.1\,\eta_k, & \ \text{otherwise.}
\end{array}
\right.
\]

With respect to the positive targets, Remark~\ref{remark1} suggests the use of $\rho > 2$ in \eqref{eq:t1} to ensure the convergence of the corresponding gradient method for quadratic functions. The aim of setting $\rho = 2.01$ is to stay close to this lower bound and take the largest possible stepsizes (which are larger than $\betaa_k$). An approach with $\rho = 100$ picks positive targets $\tau_k$ such that the corresponding stepsize $\step{}{k}$ is close to, but still larger than $\betaa_k$.

Figure \ref{fig:qps} shows the inverse stepsize value $\alpha_k$ through the iterations for QP1, QP2 and QP3. For each problem, we select the four stepsizes that require the smallest number of iterations for the convergence. All stepsizes seem to explore the whole spectrum in all problems, but in different ways. The two variants of ABB, $\text{ABB}_{\text{min}}$ and $\text{ABB}_{\text{bon}}$, perform well in all three problems. As a result of their definition, they both tend to recycle the same stepsize for some consecutive iterations. Interestingly, they gradually cancel the gradient components corresponding to the largest eigenvalues of the Hessian. This feature is particularly clear in QP2: since some gradient components are annihilated at some stage, in the latest iterations the stepsizes are concentrated only in the eigenvalues corresponding to the remaining gradient components. The stepsize COT~H1 to some extent shows the same behavior in QP2.

Other stepsize strategies do not explicitly remove some of the gradient components in an early phase, but still show a comparable number of iterations.

\begin{figure}[htb!]
\centering
\subfloat[QP1]{\includegraphics[width=\textwidth]{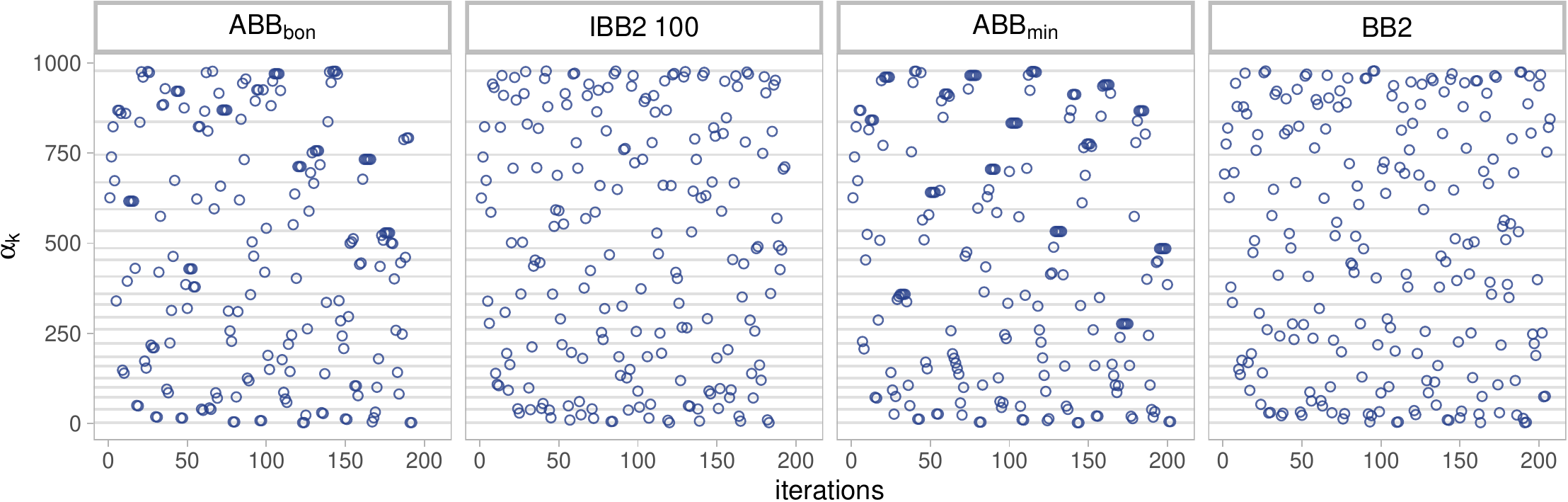}}\hfill
\subfloat[QP2]{\includegraphics[width=\textwidth]{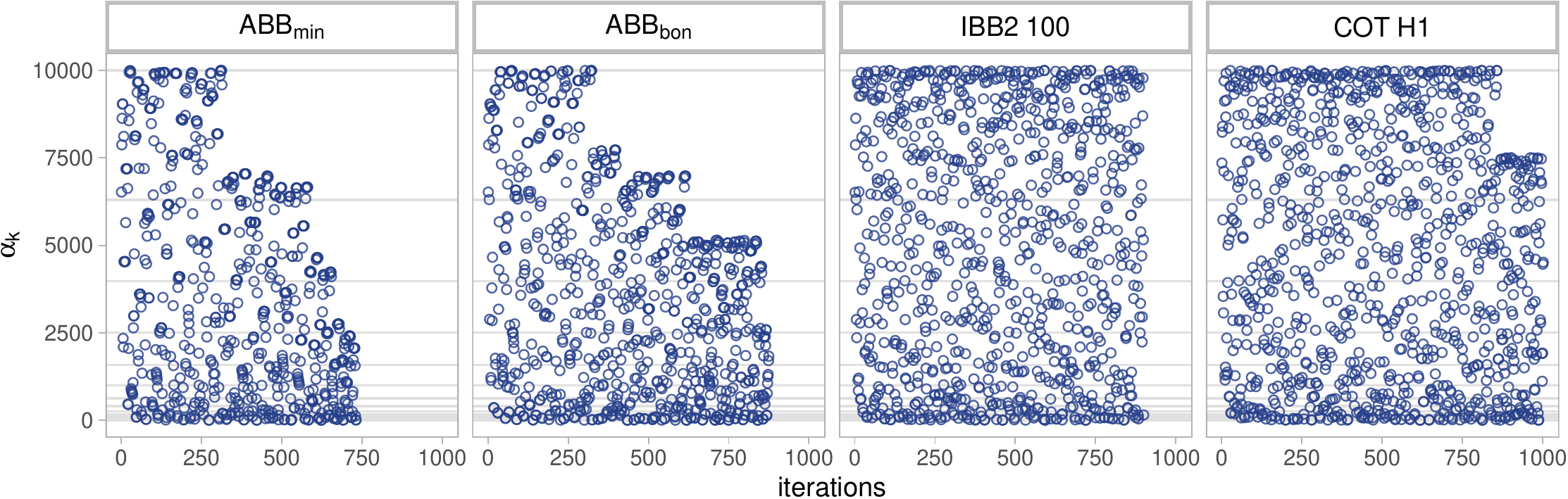}}\hfill
\subfloat[QP3]{\includegraphics[width=\textwidth]{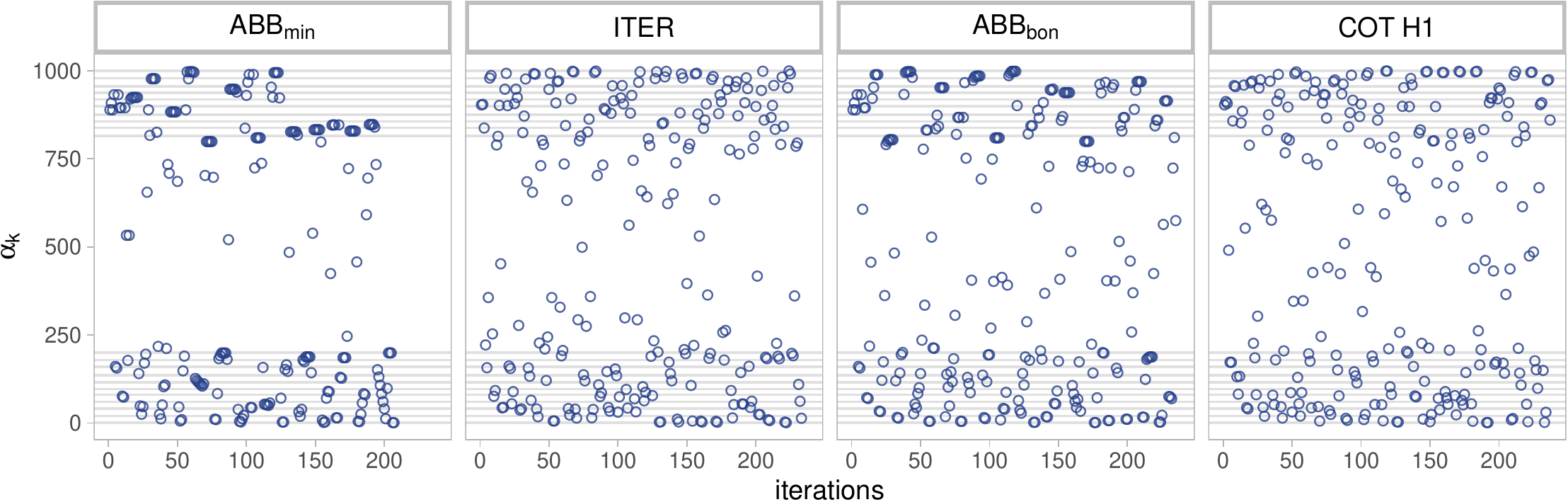}}\hfill
\caption{Inverse stepsize $\alpha_k$ per iteration. Gray lines correspond to 21 equally spaced eigenvalues of the Hessian, including the smallest and the largest, for the three quadratic problems QP1--QP3. Stepsizes are ordered based on the (increasing) number of iterations of the corresponding gradient method.}
\label{fig:qps}
\end{figure}

\subsection{Strictly convex quadratic functions}
For the problems of the form \eqref{quad}, we take examples from the SuiteSparse Matrix Collection \cite{suitesparse}. The selected matrices $A$ are 65 symmetric positive definite matrices with a number of rows between $10^2$ and $10^4$, and an estimated condition number $\le 10^{8}$ (the condition number is estimated via the routine {\sf condest} in the {\sf Matrix} R package). 
The vector $\bb$ is chosen so that the solution of $A\bx = \bb$ is $\bx^\ast = \be$, the vector of all ones.
For all problems, the starting vector is $\bx_0 = -10\,\be$, and the initial stepsize is $\step{}{0} = 1$. The algorithm stops when $\|\bg_k\| \le {\sf tol}\,\|\bg_0\|$ with ${\sf tol} = 10^{-6}$, or when $5\cdot 10^4$ iterations are reached. The problem {\sf nos4} is scaled by the Euclidean norm of the first gradient.

We compare the performances of the different stepsizes in Table~\ref{tab:experiments} by means of a performance profile \cite{perfprofile}. The cost of solving each problem is normalized based on the minimum cost for that problem, to get the \emph{performance ratio} \cite{perfprofile}. The most efficient method solves the given problem with performance ratio $1$, while all other methods solve it with a performance ratio at least $1$. We plot the ratio of problems solved by a method within a certain factor of the smallest cost; this results in a cumulative distribution for each method. The algorithms are rated based on the maximum cost that one is willing to pay to get convergence. An infinite cost is assigned whenever a method is not able to solve a problem to the tolerance within the maximum number of iterations.

Given a problem, the cost of each gradient method differs only in the computation of the stepsize. For the computation of \eqref{hss}, we exploit the fact that $\bs_k = -\beta_k\bg_k$ (cf.~Algorithm~\ref{algo:tbb}) and that the stopping criterion is based on $\|\bg_k\|$. The TBB step requires the additional computation of $\bg_k^T\by_k$ and $\|\by_k\|$ and therefore takes two extra inner products. ABB, the ABB variants and BB2 need the same quantities, while BB1 is slightly less expensive, with only one extra inner product.
Moreover, computing a gradient is usually (much) more expensive than determining the stepsize.
As a consequence, the various methods have approximately the same computational cost per iteration. In addition, in the quadratic case we do not employ a line search procedure, thus we take the number of iterations as the basis of our performance profile.

\begin{figure}[htb!]
\centering \includegraphics[width=0.7\textwidth]{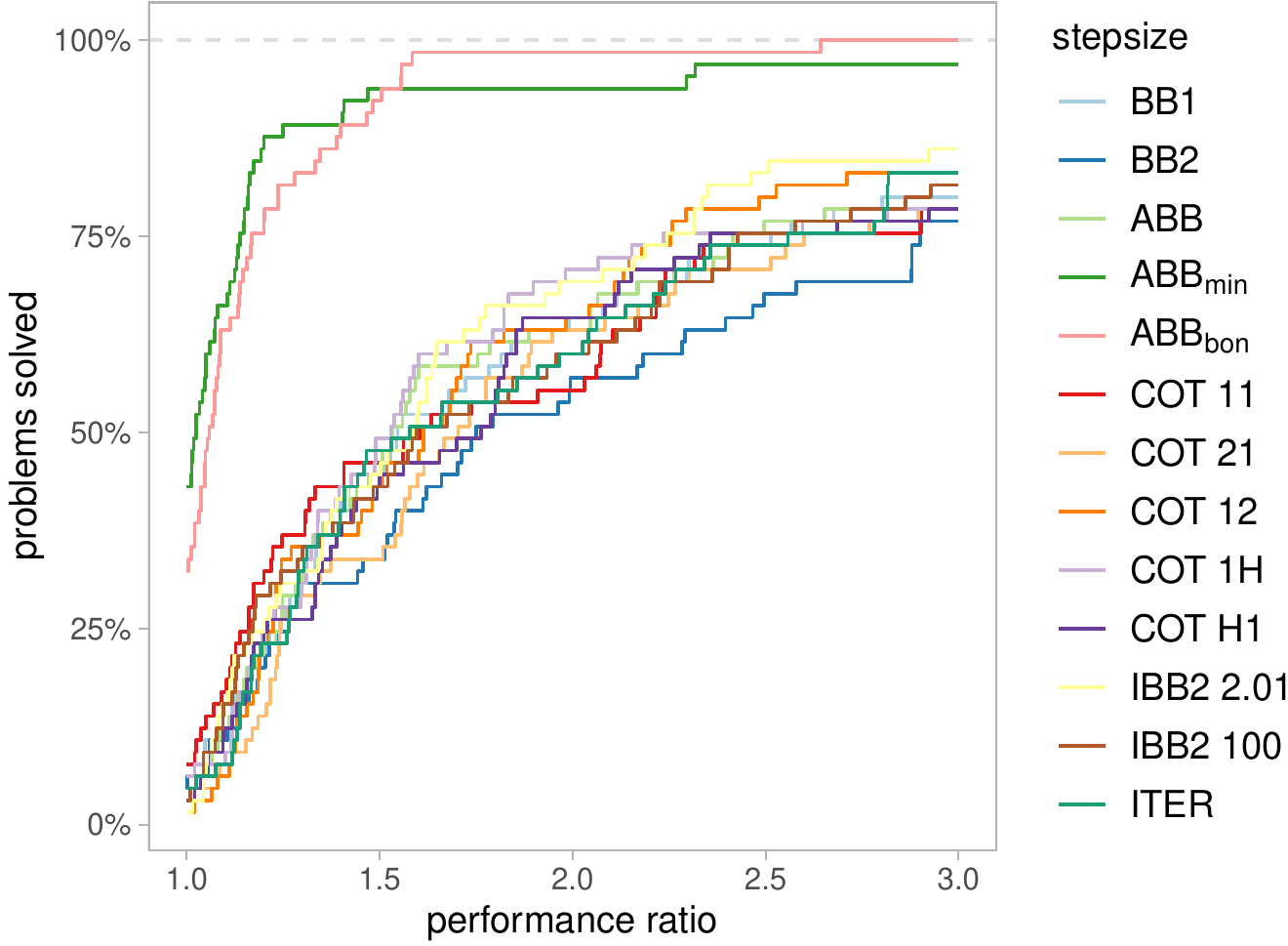}
\caption{Performance profile for strictly convex quadratic problems, based on the number of iterations.}
\label{fig:perfprofquad}
\end{figure}

Figure~\ref{fig:perfprofquad} displays the performance profile of the different stepsizes, based on the number of iterations. The performance ratio is considered in the interval $[1,\, 3]$. It is evident that the ABB variants perform much better than all other stepsizes. The stepsizes proposed in this paper are comparable with BB1, BB2, and ABB. One reason for this behavior may be that the ABB variants are especially favorable to quadratic problems: in fact, when the Hessian matrix is constant, previous BB2 stepsizes still approximate some eigenvalue of the current Hessian. This is no longer true for the generic problems: as a consequence, we will see that, in that case, the ABB variants have similar performances compared to the other stepsizes.

The COT~11 step for a performance ratio $\le 1.5$ and the IBB2~2.01 step for $[1.5,\,3]$ perform better than the other TBB steplengths; the latter one competes with COT~1H, and COT~12 in different segments of its interval. Nevertheless, while IBB2~2.01 solves slightly more than $80\,\%$ of the problems with a performance ratio $\le 3$, $\text{ABB}_{\text{bon}}$ manages to solve all problems within the same range.

Since we study the performance ratio in a restricted range, we also collect some summary information in Table~\ref{tab:quad1}. It is interesting to notice that the minimum performance ratio is $1$ for all stepsizes: this means that for each stepsize there is at least one problem where that stepsize performs at least as well as the others. The ABB variants appear to be more robust than the other stepsizes, followed by BB1 and the class of TBB steps with positive target. The steplength $\text{ABB}_{\text{min}}$ solves the highest proportion of problems at the lowest cost. 
\begin{table}[htb!]
\centering
\caption{Gradient method with TBB steps and ABB variants: proportion of solved problems within $5\cdot 10^4$ iterations, proportion of problems solved with unit cost (i.e., performance ratio (PR) equal to 1), average and standard deviation of the performance ratios (per method), and range of the performance ratios.}
\label{tab:quad1}
{\footnotesize
\begin{tabular}{lrrccc}
 \toprule
Stepsize & Solved & $\text{PR}=1$ & Avg & Sd & Range \\ 
 \midrule
$\text{ABB}_\text{bon}$ & 100\,\% & 32.3\,\% & 1.15 & 0.25 & $[1.00,\,\ph{1}2.64]$ \\ 
$\text{ABB}_\text{min}$ & 97\,\% & 43.1\,\% & 1.11 & 0.24 & $[1.00,\,\ph{1}2.32]$ \\ 
 ITER & 94\,\% & 4.6\,\% & 2.16 & 1.83 & $[1.00,\,12.56]$ \\ 
 BB1 & 94\,\% & 3.1\,\% & 2.11 & 1.75 & $[1.00,\,11.52]$ \\ 
 IBB2 100 & 94\,\% & 3.1\,\% & 2.14 & 1.95 & $[1.00,\,14.48]$ \\ 
 IBB2 2.01 & 94\,\% & 1.5\,\% & 2.04 & 2.25 & $[1.00,\,17.49]$ \\ 
 BB2 & 92\,\% & 4.6\,\% & 2.31 & 2.01 & $[1.00,\,14.07]$ \\ 
 COT H1 & 92\,\% & 3.1\,\% & 2.15 & 1.88 & $[1.00,\,13.51]$ \\ 
 COT 11 & 91\,\% & 7.7\,\% & 2.06 & 1.68 & $[1.00,\,11.27]$ \\ 
 COT 1H & 91\,\% & 6.2\,\% & 2.03 & 1.68 & $[1.00,\,10.38]$ \\ 
 COT 12 & 91\,\% & 1.5\,\% & 1.90 & 1.39 & $[1.00,\,10.05]$ \\ 
 ABB & 89\,\% & 4.6\,\% & 2.12 & 2.10 & $[1.00,\,14.54]$ \\ 
 COT 21 & 89\,\% & 3.1\,\% & 2.18 & 1.82 & $[1.00,\,11.18]$ \\ 
\bottomrule
\end{tabular}
}
\end{table}

\subsection{Unconstrained optimization}
We take some generic differentiable functions from the collections in \cite{andrei2008unconstrained, more1981testing, raydan1997barzilai} and the suggested starting points $\bx_0$ therein, as listed in Table~\ref{tab:unconstrained}. For Griewank's function we choose $\bx_0 = \be$. For each problem, we also consider the starting points $5\,\bx_0$ and $10\,\bx_0$, in line with \cite{more1981testing}.
\begin{table}[htb!]
\centering
\caption{Unconstrained optimization test problems.}
{\footnotesize
\begin{tabular}{lclc}
 \toprule
Name & Reference & Name & Reference\\ 
 \midrule
{\sf Broyden tridiagonal} & \cite{more1981testing} & {\sf Extended White and Holst} & \cite{andrei2008unconstrained} \\ 
 {\sf Diagonal 1} & \cite{andrei2008unconstrained} & {\sf Full Hessian FH1} & \cite{andrei2008unconstrained} \\ 
 {\sf Diagonal 2} & \cite{andrei2008unconstrained} & {\sf Full Hessian FH2} & \cite{andrei2008unconstrained} \\ 
 {\sf Diagonal 3} & \cite{andrei2008unconstrained} & {\sf Generalized Rosenbrock} & \cite{andrei2008unconstrained} \\ 
 {\sf Diagonal 4} & \cite{andrei2008unconstrained} & {\sf Generalized tridiagonal 1} & \cite{andrei2008unconstrained} \\ 
 {\sf Extended Beale} & \cite{andrei2008unconstrained} & {\sf Generalized tridiagonal 2} & \cite{andrei2008unconstrained} \\ 
 {\sf Extended Freudenstein and Roth} & \cite{andrei2008unconstrained} & {\sf Generalized White and Holst} & \cite{andrei2008unconstrained} \\ 
 {\sf Extended Himmelblau} & \cite{andrei2008unconstrained} & {\sf Griewank} & \cite{griewank1981generalized} \\ 
 {\sf Extended Powell} & \cite{more1981testing} & {\sf Hager} & \cite{andrei2008unconstrained} \\ 
 {\sf Extended PSC1} & \cite{andrei2008unconstrained} & {\sf Perturbed quadratic} & \cite{andrei2008unconstrained} \\ 
 {\sf Extended Rosenbrock} & \cite{more1981testing} & {\sf Strictly Convex 1} & \cite{raydan1997barzilai} \\ 
 {\sf Extended TET} & \cite{andrei2008unconstrained} & {\sf Strictly Convex 2} & \cite{raydan1997barzilai} \\ 
 {\sf Extended tridiagonal 1} & \cite{andrei2008unconstrained} & {\sf Trigonometric} & \cite{more1981testing} \\ 
\bottomrule
\end{tabular}}
\label{tab:unconstrained}
\end{table}

For all the test functions, we pick $n = 100$ variables. The {\sf generalized Rosenbrock}, {\sf generalized White and Holst} and {\sf extended Powell} objective functions have been scaled by the Euclidean norm of the first gradient. 

The gradient method for unconstrained optimization problems requires the tuning of more parameters than the gradient method for quadratic functions. We maintain the choices made in \cite{daniela2018steplength} and set $\beta_{\min} = 10^{-30}$, $\beta_{\max} = 10^{30}$, $c_{\rm{ls}} = 10^{-4}$, $\sigma_{\rm{ls}} = \frac12$, $M = 10$, and $\step{}{0} = 1$. Although one might argue that the bounds on the stepsize are extremely large, the aim of this choice is to accept the BB stepsize as frequently as possible. Following Raydan \cite{raydan1997barzilai}, we choose $\altstep_k = \max(\min(\|\bg_k\|_2^{-1}, \, 10^{5}), \ 1)$ as the replacement for negative stepsizes. Although the alternative of recycling the last positive stepsize also seems plausible, in our experiments we find that this strategy may lead to poor performance for some stepsizes. Raydan's rule seems to behave well in combination with all stepsizes. Again the algorithm stops when $\|\bg_k\| \le {\sf tol}\,\|\bg_0\|$, or when $5\cdot 10^4$ iterations are reached. We show the results for three levels of tolerance ${\sf tol} \in \{10^{-4},\,10^{-6},\,10^{-8}\}$. All different steps in Table~\ref{tab:experiments} are tested.

Since some test problems are non-convex, we check whether all gradient methods converged to the same stationary point for different stepsizes. For this reason, the following analysis will not include {\sf Broyden tridiagonal}, {\sf extended Freudenstein and Roth}, {\sf generalized tridiagonal 2}, {\sf Griewank}, {\sf trigonometric}. 

\begin{remark}
\label{remark2}
Aside the computational cost of an algorithm, the quality of the reached minimum is also an important aspect. In this context, it is interesting to notice that in the {\sf extended Freudenstein and Roth} function, for the setting ${\sf tol} = 10^{-8}$ and starting point $\bx_0$, the choice of IBB2~2.01 leads to the global optimum $f = 0$, while all the other gradient methods converge to $f \approx 1225$. The convergence of IBB2~2.01 takes approximately seven times the number of function evaluations of the fastest method, but the gradient method finds a better solution.
\end{remark}

As the performance profile, we may consider two different costs: the number of function evaluations and the number of iterations. The latter corresponds to the number of gradient evaluations, since the line search in Algorithm~\ref{algo:nonlin} does not require the computation of the gradient at the new tentative iterate. Our comparison is on the number of function evaluations, since this is the dominant cost for our test cases.
The performance profiles are shown in Figure~\ref{fig:perfunconstr} in the range $[1,\,3]$, for the number of function evaluations, and various tolerances and starting points. 
\begin{figure}[htb!]
\centering \includegraphics[width=\textwidth]{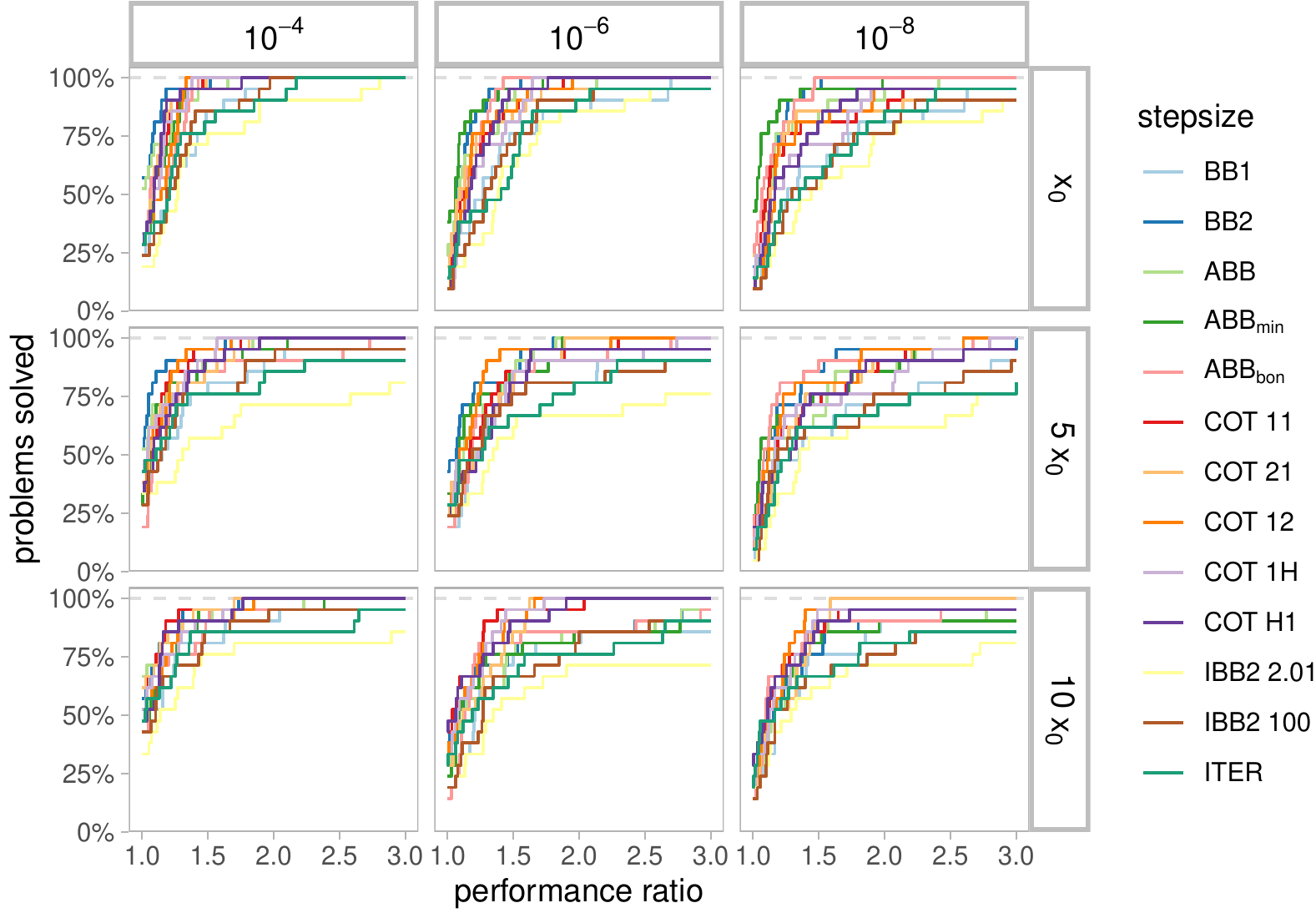}
\caption{Performance profiles for generic unconstrained optimization problems, based on the number of function evaluations. Each set of performance profiles considers a different $\tol \in \{10^{-4},\,10^{-6},\,10^{-8}\}$ (columns) and starting point $\{\bx_0,\,5\,\bx_0,\,10\,\bx_0\}$ (rows).}
\label{fig:perfunconstr}
\end{figure}

As the tolerance decreases, from left to right, we notice that the performance profiles become more distinct. In contrast with the performance profile for the quadratic case, here the ABB variants are not as prominent as before. We can still distinguish their curves, along with the one of BB2, when ${\sf tol} = 10^{-8}$ and the starting points are either $\bx_0$ or $5\,\bx_0$. The situation changes when the starting point is $10\,\bx_0$: especially when ${\sf tol} = 10^{-6}$, the ABB variants are mixed with some curves of the cotangent family when the performance ratio is $\le 1.5$. Then all curves of the cotangent family display more favorable behavior than the rest of the curves. 

\begin{table}[htb!]
\centering
\caption{Gradient method with nonmonotone line search: proportion of solved problems within $5\cdot 10^4$ iterations, proportion of problems solved with unit cost, i.e., performance ratio (PR) equal to 1, average and standard deviation of the performance ratios, range of the performance ratios. The performance ratio is based on the number of function evaluations.}
\label{tab:nfeunconstr}
\subfloat[${\sf tol} = 10^{-6}$, starting point $\bx_0$]{
\scalebox{0.7}{
\begin{tabular}{lrcccc}
\toprule
Stepsize & Solved & $\text{PR}=1$ & Avg & Sd & Range \\
 \midrule
 $\text{ABB}_{\text{min}}$ & 100\% & 38.1\% & 1.11 & 0.17 & $[1.00,\,1.73]$ \\ 
 BB2 & 100\% & 23.8\% & 1.12 & 0.14 & $[1.00,\,1.56]$ \\ 
 ABB & 100\% & 23.8\% & 1.19 & 0.27 & $[1.00,\,2.14]$ \\ 
 $\text{ABB}_{\text{bon}}$ & 100\% & 19.0\% & 1.14 & 0.13 & $[1.00,\,1.42]$ \\ 
 COT 21 & 100\% & 19.0\% & 1.27 & 0.52 & $[1.00,\,3.38]$ \\ 
 COT 11 & 100\% & 14.3\% & 1.21 & 0.24 & $[1.00,\,1.88]$ \\ 
 IBB2 2.01 & 100\% & 14.3\% & 1.55 & 0.60 & $[1.00,\,3.49]$ \\ 
 ITER & 100\% & 14.3\% & 1.44 & 0.51 & $[1.00,\,3.17]$ \\ 
 BB1 & 100\% & \ph{1}9.5\% & 1.42 & 0.50 & $[1.00,\,2.70]$ \\ 
 COT 12 & 100\% & \ph{1}9.5\% & 1.21 & 0.23 & $[1.00,\,1.95]$ \\ 
 COT 1H & 100\% & \ph{1}9.5\% & 1.24 & 0.21 & $[1.00,\,1.64]$ \\ 
 COT H1 & 100\% & \ph{1}9.5\% & 1.21 & 0.19 & $[1.00,\,1.76]$ \\ 
 IBB2 100 & 100\% & \ph{1}9.5\% & 1.43 & 0.52 & $[1.00,\,3.35]$ \\
\bottomrule
\end{tabular}}
}
\subfloat[${\sf tol} = 10^{-6}$, starting point $5\,\bx_0$]{
\scalebox{0.7}{
\begin{tabular}{lrcccc}
\toprule
Stepsize & Solved & $\text{PR}=1$ & Avg & Sd & Range \\
 \midrule
 BB2 & 100\% & 43\% & 1.15 & 0.23 & $[1.00,\,\ph{1}1.80]$ \\ 
 $\text{ABB}_{\text{min}}$ & 100\% & 33\% & 1.21 & 0.29 & $[1.00,\,\ph{1}1.87]$ \\ 
 COT 12 & 100\% & 29\% & 1.18 & 0.27 & $[1.00,\,\ph{1}2.24]$ \\ 
 ITER & 100\% & 29\% & 1.93 & 2.12 & $[1.00,\,10.52]$ \\ 
 COT 11 & 100\% & 24\% & 1.26 & 0.33 & $[1.00,\,\ph{1}2.30]$ \\ 
 COT 21 & 100\% & 24\% & 1.26 & 0.28 & $[1.00,\,\ph{1}1.89]$ \\ 
 COT 1H & 100\% & 24\% & 1.33 & 0.47 & $[1.00,\,\ph{1}2.74]$ \\ 
 COT H1 & 100\% & 24\% & 1.42 & 0.84 & $[1.00,\,\ph{1}4.94]$ \\ 
 IBB2 2.01 & 100\% & 24\% & 2.61 & 3.79 & $[1.00,\,18.59]$ \\ 
 IBB2 100 & 100\% & 24\% & 1.58 & 0.94 & $[1.00,\,\ph{1}4.27]$ \\ 
 BB1 & 100\% & 19\% & 1.53 & 0.89 & $[1.00,\,\ph{1}4.74]$ \\ 
 ABB & 100\% & 19\% & 1.22 & 0.24 & $[1.00,\,\ph{1}1.82]$ \\ 
 $\text{ABB}_{\text{bon}}$ & 100\% & 19\% & 1.33 & 0.43 & $[1.00,\,\ph{1}2.70]$ \\ 
\bottomrule
\end{tabular}}
}
\hfill
\subfloat[${\sf tol} = 10^{-6}$, starting point $10\,\bx_0$]{
\scalebox{0.7}{
\begin{tabular}{lrcccc}
\toprule
Stepsize & Solved & $\text{PR}=1$ & Avg & Sd & Range \\
 \midrule
 COT 11 & 100\% & 43\% & 1.14 & 0.24 & $[1.00,\,\ph{1}2.04]$ \\ 
 COT H1 & 100\% & 43\% & 1.18 & 0.27 & $[1.00,\,\ph{1}1.90]$ \\ 
 BB2 & 100\% & 38\% & 1.40 & 0.75 & $[1.00,\,\ph{1}3.93]$ \\ 
 COT 12 & 100\% & 33\% & 1.17 & 0.21 & $[1.00,\,\ph{1}1.66]$ \\ 
 COT 1H & 100\% & 33\% & 1.16 & 0.20 & $[1.00,\,\ph{1}1.74]$ \\ 
 ABB & 100\% & 29\% & 1.47 & 0.76 & $[1.00,\,\ph{1}3.98]$ \\ 
 COT 21 & 100\% & 29\% & 1.22 & 0.22 & $[1.00,\,\ph{1}1.62]$ \\ 
 ITER & 100\% & 29\% & 3.33 & 8.42 & $[1.00,\,39.94]$ \\ 
 $\text{ABB}_{\text{min}}$ & 100\% & 24\% & 1.46 & 0.80 & $[1.00,\,\ph{1}3.89]$ \\ 
 BB1 & 100\% & 19\% & 3.04 & 5.42 & $[1.00,\,21.66]$ \\ 
 IBB2 2.01 & 100\% & 19\% & 3.20 & 4.69 & $[1.00,\,21.73]$ \\ 
 IBB2 100 & 100\% & 19\% & 3.18 & 7.66 & $[1.00,\,36.43]$ \\ 
 $\text{ABB}_{\text{bon}}$ & 100\% & 14\% & 1.41 & 0.75 & $[1.00,\,\ph{1}3.89]$ \\ 
\bottomrule
\end{tabular}}
}
\end{table}

As in the quadratic case, Tables~\ref{tab:nfeunconstr} report some statistics on the performance ratios, based on the number of function evaluations, for ${\sf tol} = 10^{-6}$. As the performance profiles already suggested, when the starting point is $\bx_0$, the $\text{ABB}_{\text{min}}$ is the best stepsize in terms of proportion of solved problems, and problems solved at minimum cost. The BB2 step is the best method when the starting point is $5\,\bx_0$, immediately followed by $\text{ABB}_{\text{min}}$. Finally, for the problems with $10\,\bx_0$, the picture changes: COT~11 and COT~H1 are the best stepsizes, followed by BB2, but this time the range of BB2 is larger compared to the previous tables. As a consequence, $\text{ABB}_{\text{min}}$ and $\text{ABB}_{\text{bon}}$ solve a smaller proportion of problems at minimum cost. 

We have just shown that, as opposed to the quadratic case, there are situations where the TBB steps from the cotangent family show better performance than the ABB variants. In general, we observe that the stepsizes IBB2~100 and ITER are competitive with the BB1 step; IBB2~2.01 performs slightly worse, but this behavior can sometimes lead to better local optima (cf.~Remark~\ref{remark2}).  

\section{Conclusions} \label{sec:concl}
We have developed a harmonic framework for stepsize selection in gradient methods for unconstrained nonlinear optimization.
The harmonic steplength \eqref{hss} depending on targets $\tau_k$ is inspired by the harmonic Rayleigh--Ritz extraction for matrix eigenvalue problems.

The one-to-one relation between target and stepsize gives a general framework with new viewpoints and interpretations.
Compared to the eigenproblem context, where the target is commonly chosen inside the spectrum, in our situation we have studied both strategies with the target outside the spectrum and schemes that sometimes pick the target inside.
Targets on the negative real axis lead to stepsizes between BB2 and BB1. This yields connection with schemes such as \cite{dai2019family}.
We have analyzed and extended the popular ABB method. While the original ABB approach only allows a choice between two stepsizes based on a single parameter, we have introduced a new competitive family of stepsizes with tunable parameters, that enjoy the same key idea but are more flexible.
Additionally, we have considered new families of positives targets, leading to steplengths larger than the BB1 steps.
The use of harmonic stepsizes with target requires the same cost as the BB2 step, ABB, and ABB variants, and is only marginally more expensive than BB1.
The experiments suggest that both the cotangent family and the approaches with positive targets seem competitive \gf{with the well-known BB stepsizes and ABB; they compete also with the ABB variants \cite{frassoldati2008new, bonettini2009scaled} for generic unconstrained optimization problems.}

For an analysis of the new schemes, we have extended convergence results from Dai and Liao \cite{dai2002r} in Section~\ref{sec:conv}. In view of the TBB steps, instead of $\lambda_1 \le \alpha_k \le \lambda_n$, we have studied the more general setting $\xia \, \lambda_1 \le \alpha_k \le \xib \, \lambda_n$, particularly for $\frac12 < \xia \le 1$ and $\xib \ge 1$.

An R implementation of the methods described in this paper can be obtained from \href{https://github.com/gferrandi/tbbr}{github.com/gferrandi/tbbr}.

\medskip\noindent
{\bf Acknowledgment:}
The authors thank the referees and editor for their very useful comments.
This work has received funding from the European Union's Horizon 2020 research and innovation programme under the Marie Sklodowska-Curie grant agreement No 812912.

\medskip\noindent
{\bf Data Availability:} The data used during the current study are available in the SuiteSparse Matrix Collection repository, \href{https://sparse.tamu.edu/}{sparse.tamu.edu}.


\bibliography{harmref}

\end{document}